\newtheorem{Theorem}{Theorem}[section]
\newtheorem{Lemma}[Theorem]{Lemma}
\newtheorem{Proposition}[Theorem]{Proposition}
\newtheorem{Corollary}[Theorem]{Corollary}
\theoremstyle{remark}
\newtheorem{Remark}[Theorem]{Remark}
\numberwithin{equation}{section}
\def\bar{\overline}
\def\sub{\subseteq}
\def\iso{\cong}
\def\isoto{\overset{\sim}{\longrightarrow}}
\def\longto{\longrightarrow}
\def\kk{{\mathbbm k}}
\def\ad{\operatorname{ad}}
\def\Ad{\operatorname{Ad}}
\def\Ext{\operatorname{Ext}}
\def\Lie{\operatorname{Lie}}
\def\rad{\operatorname{rad}}
\def\sspan{\operatorname{span}}
\def\C{{\mathbb C}}
\def\F{{\mathbb F}}
\def\Z{{\mathbb Z}}
\def\GL{\mathrm{GL}}
\def\SL{\mathrm{SL}}
\def\Sp{\mathrm{Sp}}
\def\SO{\mathrm{SO}}
\def\O{\mathrm{O}}
\def\b{\mathfrak b}
\def\g{\mathfrak g}
\def\h{\mathfrak h}
\def\l{\mathfrak l}
\def\s{\mathfrak s}
\def\gl{\mathfrak{gl}}
\def\sl{\mathfrak{sl}}
\def\sp{\mathfrak{sp}}
\def\so{\mathfrak{so}}
\def\cN{\mathcal N}
\def\cO{\mathcal O}
\def\cU{\mathcal U}
\def\cV{\mathcal V}
\title{\boldmath On $\sl_2$-triples for classical algebraic groups in positive characteristic}
\author{Simon M.~Goodwin and Rachel Pengelly}
\address{School of Mathematics,
University of Birmingham,
Birmingham, B15 2TT,
UK}
\email{s.m.goodwin@bham.ac.uk, rap943@bham.ac.uk}
\begin{document}

\begin{abstract}
Let $\kk$ be an algebraically closed field of characteristic $p > 2$, let $n \in \Z_{>0} $, and take $G$ to
be one of the classical algebraic groups $\GL_n(\kk)$, $\SL_n(\kk)$, $\Sp_n(\kk)$, $\O_n(\kk)$ or $\SO_n(\kk)$,
with $\g = \Lie G$.
We determine the maximal $G$-stable closed subvariety $\cV$ of the nilpotent
cone $\cN$ of $\g$ such that the $G$-orbits in
$\cV$ are in bijection with the $G$-orbits of $\sl_2$-triples $(e,h,f)$ with $e,f \in \cV$.  This
result determines to what extent the theorems of Jacobson--Morozov and Kostant
on $\sl_2$-triples hold for classical algebraic groups over an algebraically closed field of ``small'' odd
characteristic.
\end{abstract}

\maketitle

\section{Introduction}

The theory of $\sl_2$-triples has been a key tool in the study of nilpotent orbits
in the Lie algebra $\g$ of a reductive algebraic group $G$ over $\C$.  This theory is
underpinned by the famous theorems of Jacobson--Morozov, \cite[Theorem 2]{Morozov} and \cite[Theorem 3]{Jacobson},
and Kostant, \cite[Theorem 3.6]{Kostant}.  These theorems combine to show that any nilpotent
$e \in \g$ can be embedded in an $\sl_2$-triple $(e,h,f)$ in $\g$, which is unique up to
the action of the centralizer of $e$ in $G$.

Subsequently there was a great deal of interest in extending the theorems
of Jacobson--Morozov and Kostant to the setting of a reductive algebraic group $G$
over an algebraically closed field $\kk$ of characteristic $p > 2$.    Recently the problem of determining
under what restriction on $p$ the theorems of Jacobson--Morozov and Kostant
remain true was solved in the work of Stewart--Thomas. To state
this result we let $\g = \Lie G$
and let $\cN$ denote the nilpotent cone of $\g$.
In \cite[Theorem~1.1]{StewartThomas}
it is shown that there is a bijection
\begin{equation} \label{e:Kostantmap}
\{ G \text{-orbits of } \sl_2\text{-triples in }  \g  \} \longto \{ G\text{-orbits in }  \cN \}
\end{equation}
sending the $G$-orbit of an $\sl_2$-triple $(e,h,f)$ to the $G$-orbit of $e$ if and only if
$p > h$, where $h$ is the Coxeter number of $G$.
A notable earlier result of Pommerening established the existence of $\sl_2$-triples containing
a given $e \in \cN$  under the hypothesis that $p$ is good for $G$, see \cite[\S2.1]{Pommerening}.
We also mention that earlier bounds on $p$
for there to be a bijection as in \eqref{e:Kostantmap} were: $p > 4h-3$ given by
Springer--Steinberg in \cite[III.4.11]{SpringerSteinberg};
and $p > 3h-3$, given by Carter, using an argument of Spaltenstein, in \cite[Section~5.5]{Carter}.

It is a natural question to consider to what extent the map
from the set of $G$-orbits of $\sl_2$-triples $(e,h,f)$ with $e,f \in \cN$ to the set of $G$-orbits in $\cN$
sending the $G$-orbit of an $\sl_2$-triple $(e,h,f)$ to the $G$-orbit of $e$
fails to be a bijection in the case where $p \le h$.  A key problem is to determine the
maximal $G$-stable closed subvarieties $\cV$ of $\cN$ such that the restriction of this map to
\begin{equation} \label{e:KostantmapV}
\left\{ G\text{-orbits of} \ \sl_2\text{-triples } (e,h,f) \text{ with } e,f \in \cV  \right\}
\longto \{ G\text{-orbits in} \ \cV \}
\end{equation}
is a bijection.  In this paper we solve this problem for the cases where $G$ is one of
$\GL_n(\kk)$, $\SL_n(\kk)$, $\Sp_n(\kk)$, $\O_n(\kk)$ or $\SO_n(\kk)$.
We determine a maximal subvariety $\cV$ of $\cN$ such that
the map in \eqref{e:KostantmapV} is a bijection, and observe that $\cV$ is in fact the unique
maximal such subvariety.  We note that it is possible to give a general argument to prove that $\cV$ is unique,
see Remark~\ref{R:uniquemax}.

Our main theorem is stated in Theorem~\ref{T:classification}.  First we give some notation required for its statement.

Let $G$ be one of $\GL_n(\kk)$, $\SL_n(\kk)$, $\Sp_n(\kk)$, $\O_n(\kk)$ or $\SO_n(\kk)$, where we
assume $n$ is even in the $\Sp_n(\kk)$ case.
 The Jordan normal form of any element in $\cN$ corresponds to a partition $\lambda$ of $n$, we recall that this uniquely determines a $G$-orbit in $\cN$, except in the case where $G = \SO_n(\kk)$ and $\lambda$ is a very even partition,
for which there are two $G$-orbits.  We recall the parametrization of $G$-orbits in $\cN$ in more detail
in \S\ref{ss:nilpotentorbits}, and for now just mention that we use the notation
$x \sim \lambda$ to denote that the partition of $n$
given by the Jordan normal form of $x$ is $\lambda$.
The subvarieties of $\cN$ required for the statement of Theorem~\ref{T:classification} are
\begin{equation} \label{e:Np-1}
\cN^{p-1} := \{ x \in \cN \mid x^{p-1}=0 \},
\end{equation}
and
\begin{equation} \label{e:1Np}
{}^1\cN^p := \{ x \in \cN \mid \ x \sim (\lambda_1, \lambda_2, \dots, \lambda_m ), \ \lambda_1 \le p, \ \lambda_2 < p \}.
\end{equation}

\begin{Theorem} \label{T:classification}
Let $\kk$ be an algebraically
closed field of characteristic $p>2$.
Let $(G,\g,\cV)$ be one of the following.
\begin{enumerate}
  \item $G = \GL_n(\kk)$, $\g = \gl_n(\kk)$, $\cV = \cN^{p-1}$,
  \item $G = \SL_n(\kk)$, $\g = \sl_n(\kk)$, $\cV = \cN^{p-1}$,
  \item $G = \Sp_n(\kk)$, $\g = \sp_n(\kk)$, $\cV = \cN^{p-1}$,
  \item $G = \O_n(\kk)$, $\g = \so_n(\kk)$, $\cV = {}^1\cN^p$,
  \item $G = \SO_n(\kk)$, $\g = \so_n(\kk)$, $\cV = {}^1\cN^p$.
 \end{enumerate}
Then the map
\begin{equation} \label{e:Kostantmapthm}
\{ G\text{-orbits of} \ \sl_2\text{-triples } (e,h,f) \text{ with } e,f \in \cV  \}
\longto \{ G\text{-orbits in} \ \cV \}
\end{equation}
given by sending the $G$-orbit of an $\sl_2$-triple $(e,h,f)$ to the $G$-orbit of $e$ is a bijection.
Moreover, $\cV$ is the unique maximal $G$-stable closed subvariety of $\cN$ that satisfies this property.
\end{Theorem}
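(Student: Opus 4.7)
The plan is to split the proof into three parts: showing the map in \eqref{e:Kostantmapthm} is well defined and surjective; showing it is injective; and establishing the maximality and uniqueness of $\cV$. The five cases of $(G,\g,\cV)$ can be treated in parallel, with the main differences coming from the classification of nilpotent orbits and the available block decompositions for each classical group.

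For well-definedness and surjectivity I would fix $e\in\cV$ and build an $\sl_2$-triple $(e,h,f)$ with $f\in\cV$ by decomposing $e$ into Jordan blocks adapted to the relevant bilinear form and attaching to each block a standard $\sl_2$-triple. In the $\GL_n$, $\SL_n$ and $\Sp_n$ cases the defining condition $x^{p-1}=0$ forces every Jordan block of $e$ to have size at most $p-1$, so the block-wise $f$ obtained this way has the same Jordan type as $e$ and therefore lies in $\cV$. In the $\O_n$ and $\SO_n$ cases the orthogonally self-dual block decomposition (recalled in \S\ref{ss:nilpotentorbits}) yields an analogous construction, and the asymmetric condition $\lambda_1\le p$, $\lambda_2<p$ is calibrated so that the at most one block of size $p$ contributes a standard triple inside an $\so_p$-summand without creating any obstruction.

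For injectivity I would adopt the classical Kostant argument. Given two $\sl_2$-triples $(e,h,f)$ and $(e,h',f')$ over a common $e$, one first conjugates by an element of $Z_G(e)$ (found in the unipotent radical of a parabolic determined by $e$) to arrange $h=h'$, and then absorbs the remaining discrepancy $f-f'\in\mathfrak{z}_\g(e)$ by a further conjugation, provided a standard $\sl_2$-cohomology obstruction on $\mathfrak{z}_\g(e)$ vanishes. The bound on Jordan block sizes built into $\cV$ ensures that the adjoint $\sl_2$-module $\g$ is a sum of irreducible modules of dimension at most $p$, which gives the required vanishing. This is essentially the same circle of ideas that underlies the Stewart--Thomas bound $p>h$, specialized to elements of $\cV$.

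The maximality statement is the main obstacle. Here I would argue orbit by orbit: for each $G$-orbit $G\cdot e_0$ of Jordan type $\lambda$ not contained in $\cV$ but minimal with this property, I would construct an explicit counterexample to the bijection on the $G$-stable closed subvariety $\cV\cup\overline{G\cdot e_0}$. The critical Jordan types are those containing a block of size exactly $p$ in the $\GL_n$, $\SL_n$ and $\Sp_n$ cases, and those with $\lambda_1>p$ or $\lambda_1=\lambda_2=p$ in the orthogonal cases. In each such case one shows either that $e_0$ admits no $\sl_2$-triple $(e_0,h,f)$ with $f$ in the enlarged variety, or that there are two non-conjugate orbits of triples over $e_0$; the $\SO_n$ case needs extra care because of the splitting of very even partitions into two $\SO_n$-orbits. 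Uniqueness of the maximal $\cV$ then follows from the general argument promised in Remark~\ref{R:uniquemax}.
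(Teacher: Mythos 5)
The surjectivity and maximality outlines are broadly sound and parallel the paper's strategy (surjectivity from Pommerening/standard triples; maximality by exhibiting explicit failures on the boundary orbits via Theorem~\ref{T:closureorder}, Proposition~\ref{P:cVsubcNp} and Proposition~\ref{P:SLp}; uniqueness via Remark~\ref{R:uniquemax}). The genuine gap is in your injectivity argument.

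You invoke the classical Kostant argument and justify its applicability by asserting that, for $e,f\in\cV$, ``the adjoint $\sl_2$-module $\g$ is a sum of irreducible modules of dimension at most $p$.'' This is false. Even in the cleanest case $G=\GL_n(\kk)$, $\cV=\cN^{p-1}$, the condition $e^{p-1}=f^{p-1}=0$ gives control over the \emph{natural} module $\kk^n$ (it becomes a module for $A=U(\s)/\langle e^{p-1},f^{p-1}\rangle$ and hence a direct sum of $V(d)$'s with $d\le p-2$, once one knows $A$ is semisimple), but the \emph{adjoint} module is $\kk^n\otimes(\kk^n)^*$, and tensor products like $V(p-2)\otimes V(p-2)^*$ are not semisimple in characteristic $p$: they involve tilting modules of highest weight up to $2p-4\ge p$. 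Concretely, $\ad h$ has eigenvalues as large as $2(p-2)$, which exceeds $p-1$ whenever $p>3$, so $\mathfrak z_\g(e)$ does not sit in nonnegative $\ad h$-weight spaces and the cohomological vanishing you need is not available. This is precisely the obstruction that forces the Springer--Steinberg/Carter/Stewart--Thomas bounds to be larger than $p$; your variety $\cV$ reaches far past the range where that argument would go through.

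The paper sidesteps this entirely by never touching the adjoint module. For $\GL_n$ it establishes (Corollary~\ref{C:Asemisimple}) that $A=U(\s)/\langle e^{p-1},f^{p-1}\rangle$ is semisimple, so the natural $n$-dimensional $A$-module is determined up to isomorphism by the Jordan type of $e$, which is exactly injectivity. For $\Sp_n$ and $\O_n$ it then uses Lemma~\ref{L:sl2orbitsarethesame} (two $\sl_2$-triples in $\g$ are $G$-conjugate iff $\GL_n$-conjugate) to transfer the $\GL_n$ result, rather than rerunning a Kostant-type argument inside $G$. Moreover, in the $\O_n$ case with $\lambda_1=p$ the natural module is \emph{not} even a module for $A$ (since $e^{p-1}\neq 0$), and Proposition~\ref{P:orthogonal} does delicate extension analysis (via Lemma~\ref{L:splitting} and Lemma~\ref{L:part}) of the filtration $0\le M\le M^\perp\le V$ to show the semisimplicity of $V$ nonetheless. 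Your proposal has no mechanism to handle this boundary case, and the ``asymmetric condition is calibrated'' remark stays at the level of intent. I would replace the Kostant step entirely: prove the relevant statement about the natural module (or import the Jacobson result \cite[Theorem 1]{JacobsonTDSLA} that $A$ is semisimple), transfer via $\GL_n$-conjugacy, and then do the dedicated argument for the $\O_n$ case $\lambda_1=p$, and finally the $\SO_n$ splitting count as in Proposition~\ref{P:orthogonalunderSOn}.
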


As we frequently consider $G$-stable closed subvarieties $\cV$ of $\cN$ such that
the map in \eqref{e:Kostantmapthm} is a bijection, we introduce a shorthand
for such varieties, and say that such a variety satisfies the {\em $\sl_2$-property}.
Then Theorem~\ref{T:classification} determines the unique maximal $G$-stable closed subvariety
$\cV$ of $\cN$ that satisfies the $\sl_2$-property.  Or in other words it states that for $e \in \cV$,
there exists a unique $\sl_2$-triple $(e,h,f)$ in $\g$ with $f \in \cV$
up to conjugacy by the centralizer of $e$ in $G$, and
moreover, $\cV$ is maximal with respect to this property.

We remark that the variety $\cV$ in Theorem~\ref{T:classification} is equal to
$\cN$ if and only if $p > h$.
With a small argument to show that, for $p > h$, any $\sl_2$-triple $(e,h,f)$ in $\g$
satisfies $e,f \in \cN$, the classical cases in \cite[Theorem~1.1]{StewartThomas} can be deduced.

We mention that Theorem~\ref{T:classification} can be used to deduce a result about $G$-completely
reducible subalgebras of $\g$ isomorphic to $\sl_2(\kk)$.  We do not go into details
here and just say that Theorem~\ref{T:classification} can be used to show that if a subalgebra $\s \iso \sl_2(\kk)$ of $\g$ has basis
$\{e,h,f\}$ such that $e,f \in \cV$, then $\s$ is $G$-completely reducible; and moreover, $\cV$ is
maximal for this property.  We refer to the introduction of \cite{StewartThomas} for some
discussion and references on $G$-complete reducibility.
We also mention a connection with the theory of
good $A_1$-subgroups as introduced in the work of Seitz in \cite{Seitz},
see also the theory of optimal $\SL_2$-homomorphisms in the work of McNinch in \cite{McNinch}.
Using Theorem~\ref{T:classification} it can be shown that if a subalgebra  $\s \iso \sl_2(\kk)$ of $\g$ has basis
$\{e,h,f\}$ such that $e,f \in \cV$, then $\s= \Lie S$ for some good $A_1$-subgroup $S$ of $G$; and moreover, $\cV$ is
maximal with respect to this property.

The theory of standard $\sl_2$-triples
as introduced by Premet--Stewart in \cite[\S2.4]{PremetStewart} is discussed in \S\ref{ss:standard}, and
is used for part of our proof of Theorem~\ref{T:classification}.
As a consequence of this theory it can be deduced that
the $\sl_2$-triples occurring in Theorem~\ref{T:classification}
are all standard $\sl_2$-triples.  Also it can be used to show
that there is a unique maximal $G$-stable closed subvariety of $\cN$
satisfying the $\sl_2$-property, see Remark~\ref{R:uniquemax}.

In future work we intend to investigate the problem of
determining maximal $\cV \sub \cN$ satisfying the $\sl_2$-property
in the case $G$ is of exceptional type.
We also mention that one may consider analogous problems for $\sl_2$-subalgebras
rather than $\sl_2$-triples.  For example in \cite[Theorem~1.3]{StewartThomas},
precise conditions are given for there to be a bijection between $G$-orbits in $\cN$
and $G$-orbits of $\sl_2$-subalgebras.

We give a summary of the structure of this paper, in which we
outline the ideas in our proof of Theorem~\ref{T:classification}.

In \S\ref{ss:sl2modules}, we recall some required representation theory of $\sl_2(\kk)$,
and give some notation.  Then in \S\ref{ss:nilpotentorbits} we recap the parametrization of nilpotent orbits for classical groups,
and also recall the closure order on these orbits.
In \S\ref{ss:standard}, we discuss the theory of standard $\sl_2$-triples,
from which we deduce that
if $\cV$ satisfies the $\sl_2$-property, then $\cV$ is a subvariety of the
restricted nilpotent cone $\cN^p := \{x \in \g \mid x^p = 0\}$, see
Proposition~\ref{P:cVsubcNp}.
In \S\ref{ss:SLp} we consider $\sl_2$-triples
for the case $G = \SL_p(\kk)$, and recall a known result that for $e \in \cN$
with $e \sim (p)$ there are multiple $\sl_2$-triples $(e,h,f)$ up to conjugacy by $\SL_p(\kk)$.
This can be used to show that if $\cV$ satisfies the $\sl_2$-property, then $\cV$ cannot contain
a nilpotent element that is regular in a Levi subalgebra of $\g$ whose
derived subalgebra is $\sl_p(\kk)$, see Proposition~\ref{P:SLp}.  It turns out that
Propositions~\ref{P:cVsubcNp} and \ref{P:SLp} are precisely what is needed to prove the
maximality of the subvarieties $\cV$ in Theorem~\ref{T:classification}.

In Section~\ref{S:glsl} we consider the cases where $G$ is
$\GL_n(\kk)$ or $\SL_n(\kk)$.  For the case $G = \GL_n(\kk)$, we consider the algebra
$A := U(\sl_2(\kk)) / \langle e^{p-1}, f^{p-1} \rangle$, and use that $A$ is a semisimple
algebra to prove that $\cV = \cN^{p-1}$ has the $\sl_2$-property in Corollary~\ref{C:gl}.
The semisimplicity of $A$ is given by a theorem of Jacobson, \cite[Theorem 1]{JacobsonTDSLA}, see also
\cite[Theorem~5.4.8]{Carter}, though we provide an alternative proof of this.  Our proof involves determining
a lower bound for $\dim (A/\rad A)$ and an upper bound for $\dim A$, which are equal, and from
this we can deduce that Jacobson radical of $A$ is zero.
We are then able to complete the proof of Theorem~\ref{T:classification}(a)
by noting that Proposition~\ref{P:SLp} implies maximality of $\cN^{p-1}$ satisfying the
$\sl_2$-property.  In \S\ref{ss:sl} we explain that
the case $G= \SL_n(\kk)$ in Theorem~\ref{T:classification}(b) follows quickly.

In Section~\ref{S:resultsforclassicals} we consider the cases where $G$ is one of
$\Sp_n(\kk)$, $\O_n(\kk)$ or $\SO_n(\kk)$.  A useful result for us is
Lemma~\ref{L:sl2orbitsarethesame}, which states that two $\sl_2$-triples in $\sp_n(\kk)$ (respectively
$\so_n(\kk)$) are conjugate by an element of $\GL_n(\kk)$  if and only if
they are conjugate by an element of $\Sp_n(\kk)$ (respectively $\O_n(\kk)$).
Using Lemma~\ref{L:sl2orbitsarethesame} and Theorem~\ref{T:classification}(a) we are able to
quickly show that for $G = \Sp_n(\kk)$ or $\O_n(\kk)$, we have that $\cN^{p-1}$ satisfies the $\sl_2$-property.
Then for $G = \Sp_n(\kk)$ we complete the proof of Theorem~\ref{T:classification}(c) by
using Propositions~\ref{P:cVsubcNp} and \ref{P:SLp} to deduce maximality.
We move on to deal with the case $G = \O_n(\kk)$ in \S\ref{ss:orthogonal}, where more
work is needed to consider
the case where $e \sim \lambda = (\lambda_1,\dots,\lambda_m)$ such that
$\lambda_1 = p$, $\lambda_2 < p$.  This requires
a detailed analysis of certain $\sl_2(\kk)$-modules, which is completed in the proof of Proposition~\ref{P:orthogonal}.
This proposition shows that $\cV = {}^1\cN^{p-1}$ does satisfy the $\sl_2$-property.
We then deduce maximality of $\cV$ similarly
to the previous cases to complete the proof of Theorem~\ref{T:classification}(d).
We are left to deduce Theorem~\ref{T:classification}(e) which is done in
Proposition~\ref{P:orthogonalunderSOn}.  The key step in the proof of this proposition
is to observe that the splitting of $\O_n(\kk)$-orbits of $\sl_2$-triples
$(e,h,f)$ in $\so_n(\kk)$ with $e,f \in \cN$  into
$\SO_n(\kk)$-orbits lines up exactly with the splitting of $\O_n(\kk)$-orbits in $\cN$ into
$\SO_n(\kk)$-orbits.

\subsection*{Acknowledgments}
The results in this paper form part of the PhD research of the second author, who is grateful to the EPSRC
for financial support.  The first author is supported by EPSRC grant EP/R018952/1.  We thank Alexander Premet, David Stewart and
Adam Thomas for some helpful comments, and the referee for some helpful suggestions

\section{Preliminaries}

Throughout the rest of this paper $\kk$ is an algebraically closed field of characteristic $p>2$.
All algebraic groups and Lie algebras we work with are over $\kk$.
The prime subfield of $\kk$ is denoted by $\F_p$.
 We use the notation $\s = \sl_2(\kk)$.

\subsection{\texorpdfstring{$\sl_2$}{sl\_2}-triples and some representation theory of \texorpdfstring{$\sl_2(\kk)$}{sl\_2(k)}} \label{ss:sl2modules}

We recall the definition of an $\sl_2$-triple in a Lie algebra $\g$.  We say that
$(e,h,f)$ is an {\em $\sl_2$-triple} in $\g$ if $e,h,f \in \g$ and $[h,e]=2e$, $[h,f] = -2f$ and
$[e,f] = h$.  In other words an $\sl_2$-triple in $\g$ is the image of the standard
ordered basis $(e,h,f)$ of $\s = \sl_2(\kk)$ under a homomorphism $\s \to \g$.  We note that
there is a conflict of notation in that we are using
$e$, $h$ and $f$ to denote both elements of the standard basis of $\s$, and also elements of $\g$.  This
abuse of notation does not tend to cause confusion, so we allow it
in this paper.

We give some notation for $\s$-modules and related $\sl_2$-triples that is
used in this paper.
Given an $\s$-module $M$ and $x \in \s$, we write
$x_M \in \gl(M)$ to denote linear transformation given by the action of $x$ on $M$.
Then we have that $(e_M,h_M,f_M)$ is an $\sl_2$-triple in $\gl(M)$, and in fact
lies in $\sl(M)$ as $\s$ is equal to its derived subalgebra and $\sl(M)$
is the derived subalgebra of $\gl(M)$.

We next recall some aspects of the representation theory of $\s$ that we require later.
This involves explaining the classification of simple $\s$-modules
on which $e$ and $f$ act nilpotently, and some information about extensions between
these simple modules.
The reader is referred to \cite[Section 5]{JantzenLA} for more details, and
an explanation of the classification of all simple $\s$-modules, though
our notation is a bit different.

As above we write $\{e,h,f\}$ for the standard basis of $\s$.  Then the
universal enveloping algebra $U(\s)$ of $\s$ has PBW basis
$\{e^a h^b f^c \mid a,b,c \in \Z_{\ge 0}\}$.  The {\em $p$-centre} of
$U(\s)$ is the subalgebra of the centre of $U(\s)$ generated by $\{e^p,h^p-h,f^p\}$,
and is in fact the polynomial algebra generated by these elements.

Given any simple $U(\s)$-module $M$ the linear transformations $e_M^p$, $h_M^p-h_M$ and $f_M^p$
must act as scalars by Quillen's Lemma.  Thus if $e_M$ and $f_M$ act nilpotently, then $e_M^p = 0$ and $f_M^p = 0$.
Therefore, the simple modules for $U(\s)$ on which $e$ and
$f$ act nilpotently are precisely the modules for
$U_{\star,0}(\s) := U(\s)/\langle e^p,f^p \rangle$.  In fact there
exists $d \in \kk$ such that $M$ is a module for the reduced enveloping
algebra $U_{d,0}(\s):= U(\s)/\langle h^p-h-d^p,e^p,f^p \rangle$.
Since $h^p-h$, $e^p$ and $f^p$
lie in the centre of $U(\s)$ we see that $U_{\star,0}(\s)$ has basis
$\{e^a h^b f^c \mid a,b,c \in \Z_{\ge 0},\ a,c < p\}$ and $U_{d,0}(\s)$
has basis $\{e^a h^b f^c \mid a,b,c \in \Z_{\ge 0},\ a,b,c < p\}$.

Let $\b := \kk h \oplus \kk e$, which is a Borel subalgebra of $\s$.  Define
$U_{\star,0}(\b) := U(\b)/\langle e^p \rangle$.  For $d \in \kk$,
the 1-dimensional $U_{\star,0}(\b)$-module $\kk 1_d$ is defined
by $e \cdot 1_d := 0$ and $h \cdot 1_d := d1_d$.  Then the baby Verma module $Z(d)$ is defined
as $Z(d) := U_{\star,0}(\s) \otimes_{U_{\star,0}(\b)} \kk 1_d$ and has basis
$\{ v_i := f^i \otimes 1_d \mid i \in \{ 0 , \dots, p-1 \} \}$.
The elements of $\s$ act on this basis as
\begin{align*}
h \cdot v_i &= (d - 2i) v_i \\
e \cdot v_i &= \begin{cases}
       \text{$0$} &\quad\text{if $i=0$}\\
       \text{$i(d - i + 1) v_{i-1}$} &\quad\text{if $i>0$}\\
     \end{cases} \\
f \cdot v_i &= \begin{cases}
       \text{$v_{i+1}$} &\quad\text{if $i<p-1$}\\
       \text{$0$} &\quad\text{if $i=p-1$.}\\
     \end{cases}
\end{align*}
Thus, by using that any submodule of $Z(d)$ contains a vector that is killed by $e$,
it can be seen that $Z(d)$ is simple unless $d \in \F_p \setminus \{p-1\}$.  Further,
for $d \in \F_p$, we have that $Z(d)$ has a unique simple quotient $V(d)$ of dimension
$d+1$, where we identify $\F_p = \{0,1,\dots,p-1\} \sub \Z$ to make sense of this dimension.  This determines
all the simple $U_{\star,0}(\s)$-modules on which $e$ and $f$ act nilpotently.

For $c,d \in \{0,1,\dots,p-2\}$, it is known that
\begin{equation} \label{e:extension}
\Ext^1_\s(V(c),V(d)) = 0 \text{ except in the
case } d = p-c-2,
\end{equation}
see for example \cite[Lemma~2.7]{StewartThomas}.
We note that it is well-known that the action of the Casimir element, which
lies in the centre of $U(\s)$, can be used to show that $\Ext^1_\s(V(c),V(d)) = 0$ for $d \ne c,p-c-2$.
Also we note that $\Ext^1_\s(V(d),V(d)) = 0$ can be deduced fairly quickly from
Corollary~\ref{C:Asemisimple}, for $d \in \{0,1,\dots,p-2\}$.

\subsection{Nilpotent orbits for classical groups} \label{ss:nilpotentorbits}
Let $G$ be one of $\GL_n(\kk)$, $\SL_n(\kk)$, $\Sp_n(\kk)$, $\O_n(\kk)$ or $\SO_n(\kk)$ (where we assume $n$ is even
in the $\Sp_n(\kk)$ case), let $\g = \Lie(G)$ and let $\cN$ be the
nilpotent cone of $\g$.
We give an overview of the well-known parametrization of $G$-orbits in $\cN$ in terms of Jordan types, for more
details we refer the reader to \cite[Section 1]{JantzenNO}.

In this paper by a partition we mean a sequence $\lambda =
(\lambda_1, \lambda_2, \dots ,\lambda_m)$
of positive integers $\lambda_i$ such that $\lambda_i \ge \lambda_{i+1}$ for $i = 1,\dots,m-1$; we have
the convention that $\lambda_i = 0$ for $i > m$.
We say that $\lambda$ is partition of $\lambda_1 + \lambda_2 + \dots + \lambda_m$.
We sometimes use superscripts to denote multiplicities in partitions, so
for example may write $(3^2,2,1^3)$ as a shorthand for $(3,3,2,1,1,1)$.
For a partition $\lambda$ and $i \in \Z_{>0}$, we define $m_i(\lambda)$ to be the multiplicity of $i$
in $\lambda$.  Given partitions $\lambda$ and $\mu$ we define $\lambda | \mu$
to be the partition with $m_i(\lambda | \mu) = m_i(\lambda) + m_i(\mu)$
for all $i \in \Z_{>0}$

Let $x \in \cN$.  Then
$x$ has a Jordan normal form, which determines a partition $\lambda$ of $n$.
We refer to $\lambda$ as the {\em Jordan type of $x$}
and write $x \sim \lambda$.  We also use the notation $\lambda(x)$ to denote
the Jordan type of $x$.

It is well-known that in the cases $G=\GL_n(\kk)$ or $G=\SL_n(\kk)$, the $G$-orbits
in $\cN$ are parameterized by their Jordan type.  Also in the
cases $G = \Sp_n(\kk)$ or $G=\O_n(\kk)$ the $G$-orbits in $\cN$ are parameterized by
their Jordan types, and the Jordan types that can occur are known explicitly.
For a partition $\lambda$ of $n$, there is a nilpotent element $x \in \sp_n(\kk)$
(respectively $\so_n(\kk)$) with Jordan type $\lambda$ if and
only if $m_i(\lambda)$ is even for all odd $i$ (respectively $m_i(\lambda)$
is even for all even $i$).
To describe the parametrization in
the case $G = \SO_n(\kk)$, we note that the $\O_n(\kk)$-orbit of $x \in \cN$
is either a single $\SO_n(\kk)$-orbit, or splits into two $\SO_n(\kk)$-orbits.
The former occurs if the centralizer of $e$ in $\O_n(\kk)$ contains
an element of $\O_n(\kk) \setminus \SO_n(\kk)$ whilst the latter occurs
if the centralizer of $e$ in $\O_n(\kk)$ is contained in $\SO_n(\kk)$.
The centralizer of $e$ in $\O_n(\kk)$ is contained in $\SO_n(\kk)$
precisely when all parts of $\lambda$ are even; such partitions
are referred to as {\em very even} as all parts are even and have even multiplicity.

We recall the partial ordering $\preceq$ on the set of
partitions called the {\em dominance order}.  Given partitions
$\lambda$ and $\mu$
we write $ \mu  \preceq \lambda $ if
\[
\sum_{i=1}^r \mu_i \le \sum_{i=1}^r \lambda_i \ \text{for all} \ r \in \Z_{>0}.
\]

We next state a theorem essentially due to Spaltenstein, which shows that the closure
order on nilpotent orbits is determined by the dominance order on partitions.  In the statement
we use the notation $\cO_\lambda$ for the $G$-orbit in $\cN$ of elements with Jordan type $\lambda$.

\begin{Theorem} \label{T:closureorder}
Let $G$ be one of  $\GL_n(\kk)$, $\SL_n(\kk)$, $\Sp_n(\kk)$ or $\O_n(\kk)$.  Let $\lambda$ and
$\mu$ be partitions of $n$ that parameterize a $G$-orbit in $\cN$.
Then $\cO_{\mu} \sub \bar{\cO_{\lambda}}$ if and only if $\mu \preceq \lambda$.
\end{Theorem}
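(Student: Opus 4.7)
The plan is to prove the two implications separately: the easy direction $\cO_\mu \sub \bar{\cO_\lambda} \Rightarrow \mu \preceq \lambda$ follows from semicontinuity of the ranks of powers, while the hard direction reduces to minimal covers in the dominance order, each of which is handled by an explicit one-parameter family of nilpotent elements.

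For the easy direction, I would use that for each fixed $r \ge 1$ the function $\g \to \Z_{\ge 0}$, $x \mapsto \dim \ker x^r$, is upper semicontinuous; the condition $\dim \ker x^r \ge k$ is closed, being cut out by the vanishing of suitable minors of $x^r$. Hence any $y \in \bar{\cO_\lambda}$ of Jordan type $\mu$ satisfies $\dim \ker y^r \ge \dim \ker x^r$ for $x \in \cO_\lambda$ and every $r \ge 1$. A direct calculation on Jordan blocks gives
\[
\dim \ker x^r \;=\; \sum_i \min(\lambda_i, r) \;=\; \sum_{j=1}^r \lambda^T_j,
\]
where $\lambda^T$ denotes the conjugate partition. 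The inequalities therefore read $\lambda^T \preceq \mu^T$, which is equivalent to $\mu \preceq \lambda$.

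For the converse, a standard poset induction reduces to showing $\cO_\mu \sub \bar{\cO_\lambda}$ in the case that $\lambda$ covers $\mu$ in the dominance order restricted to those partitions of $n$ that actually parameterise a $G$-orbit in $\cN$. In the $\GL_n(\kk)$ case (and hence also $\SL_n(\kk)$), I would invoke Gerstenhaber's classical construction, which for each minimal move writes down an explicit one-parameter family $x_t \in \gl_n(\kk)$ with $x_t \sim \lambda$ for $t \ne 0$ and $x_0 \sim \mu$; this shows $\cO_\mu \cap \bar{\cO_\lambda} \ne \emptyset$, and since $\bar{\cO_\lambda}$ is $G$-stable and closed, $\cO_\mu \sub \bar{\cO_\lambda}$. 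The construction is combinatorial and characteristic-free.

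The main obstacle is extending these degenerations to $\sp_n(\kk)$ and $\so_n(\kk)$: the one-parameter family must now lie in the Lie algebra preserving the relevant bilinear form, and the minimal covers of the restricted poset are more delicate because of the parity conditions (odd parts of even multiplicity in type $C$, even parts of even multiplicity in types $B$ and $D$). My plan would be to follow Spaltenstein's case analysis, enumerating the minimal covers permitted by the parity restrictions and, for each one, exhibiting a form-preserving one-parameter family carrying Jordan type $\lambda$ to $\mu$. Since this verification is lengthy and entirely classical, in practice I would invoke Spaltenstein's theorem as a black box rather than reproduce the case-by-case construction.
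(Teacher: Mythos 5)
Your approach is genuinely different from the paper's. The paper does not argue in the Lie algebra at all: it invokes a Springer isomorphism $\cU \isoto \cN$ to transfer the closure order problem from the nilpotent cone to the unipotent variety, and then cites Spaltenstein's Th\'eor\`eme~8.2 (which is a statement about unipotent \emph{classes} in the group, not adjoint orbits) together with the exposition in Carter, Section~13.4. Your plan instead stays entirely on the Lie algebra side: a clean, characteristic-free semicontinuity argument for $\cO_\mu \sub \bar{\cO_\lambda} \Rightarrow \mu \preceq \lambda$, followed by explicit Gerstenhaber/Kraft--Procesi--style degenerations for the converse. The semicontinuity step (using $\dim\ker x^r = \sum_{j=1}^r \lambda_j^T$ and the closedness of rank conditions) is correct and is actually more informative than what the paper offers, since the paper does not supply a separate argument for this direction.

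One point you should tighten: when you say you would ``invoke Spaltenstein's theorem as a black box'' for the symplectic and orthogonal covers, note that the standard citation from Spaltenstein's Lecture Notes is for unipotent classes in the algebraic group, not for nilpotent orbits in $\g$. If you are going to quote it for $\cN$, you need the same Springer isomorphism bridge that the paper uses (and this needs existence of a Springer isomorphism in odd characteristic, which is the content of Humphreys \S6.20). Alternatively, if you carry out the form-preserving one-parameter degenerations directly in $\sp_n(\kk)$ and $\so_n(\kk)$ as you outline, then you avoid that transfer entirely, but you must check that the constructions (which are usually written in characteristic zero, e.g.\ by Kraft--Procesi) remain valid in characteristic $p > 2$; this is believable but is not literally ``black-boxing Spaltenstein.'' Either route works, but as written there is a slight mismatch between what you propose to cite and where that citation actually lives.
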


To explain why this theorem holds, we first note that there is a Springer isomorphism
from the variety $\cU$ of unipotent elements in $G$ to $\cN$; that is a $G$-invariant
isomorphism of varieties $\cU \isoto \cN$.
We refer for example to \cite[\S6.20]{Humphreys} for a statement on existence
of Springer isomorphisms and note also that explicit examples of
Springer isomorphisms for $\GL_n(\kk)$, $\SL_n(\kk)$, $\Sp_n(\kk)$ and $\O_n(\kk)$ are given there.
A result of Spaltenstein, \cite[Theoreme~8.2]{Spaltenstein},
establishes that the dominance
order on partitions determines the closure order for the unipotent classes;
we refer also to \cite[Section~13.4]{Carter}, where this result of Spaltenstein is
covered.  Thus Theorem~\ref{T:closureorder} can be deduced using a Springer isomorphism.

We note that the closure order on the nilpotent orbits for the case $G=\SO_n(\kk)$ is also covered
in the result of Spaltenstein.  Here we have that if $\lambda$ and $\mu$
are distinct partitions of $n$ that parameterize $G$-orbits $\cO_{\mu}$ and $\cO_{\lambda}$ in $\cN$,
then $\cO_{\mu} \sub \bar{\cO_{\lambda}}$ if and only if $\mu \preceq \lambda$.
In the case where $\lambda$ is a very even partition, then the two nilpotent
orbits corresponding to $\lambda$ are incomparable.  We note that this can also quickly
be deduced from the $G = \O_n(\kk)$ case with a key step being to note that if $\lambda$ and $\mu$ are both
very even, then there exists some
not very even partition $\kappa$ parameterizing a nilpotent $\O_n(\kk)$-orbit
such that $\mu \preceq \kappa \preceq \lambda$.

\subsection{Standard \texorpdfstring{$\sl_2$}{sl\_2}-triples} \label{ss:standard}

Let $G$ be one of $\GL_n(\kk)$, $\SL_n(\kk)$, $\Sp_n(\kk)$, $\O_n(\kk)$ or $\SO_n(\kk)$ (where we assume $n$ is even
in the $\Sp_n(\kk)$ case), let $\g = \Lie(G)$ and let $\cN$ be the
nilpotent cone of $\g$.  We recall that the $p$-power map on $\g$ is given by
taking the $p$th power of a matrix, so we just write this as $x \mapsto x^p$.

We discuss standard $\sl_2$-triples as introduced by Premet--Stewart in \cite[\S2.4]{PremetStewart}.
This theory of standard $\sl_2$-triples is based on the theory of optimal cocharacters associated
to nilpotent elements developed by Premet in \cite[Section 2]{Premet}.
We note that the material in \cite[\S2.4]{PremetStewart} is
stated only for the case $G$ is a simple group of exceptional type,
and that some of \cite[Section 2]{Premet} works
under the assumption that the derived subgroup of $G$ is simply connected and
there is a non-degenerate $G$-invariant symmetric bilinear from on $G$.
However, the results that we cover go through in our setting,
see for instance the arguments given in \cite[\S2.3]{Premet}.
On the other hand the results we state below can be proved more generally
to cover groups of exceptional type, but this requires some modification
to our arguments.

We recap the construction of standard $\sl_2$-triples given in
\cite[\S2.4]{PremetStewart}.  Let $e \in \cN$ and let $\tau : \kk^\times \to G$ be an optimal cocharacter for $e$,
where by optimal we mean optimal in terms of the Kempf-Rousseau theory as explained in \cite[\S2.2]{Premet}.
By \cite[Theorem 2.3]{Premet}, we may, and do, choose $\tau$ so that $\tau (t)\cdot e = t^2 e$.
Let $h_\tau := \mathrm d\tau(1) \in \g$, then we have $[h_\tau,e] = 2e$.  Let
$G^e$ be the centralizer of $e$ in $G$, let $C_G(\tau)$ be the centralizer of
$\tau$ in $G$, and let $C^e := G^e \cap C_G(\tau)$.  Let $T^e$ be a
maximal torus of $C^e$ and let $L := C_G(T^e)$.  Then $L$ is a Levi subgroup
such that $e$ is distinguished nilpotent in the Lie algebra $\l'$ of the
derived subgroup $L'$ of $L$, that is, the only Levi subalgebra of $\l'$ containing $e$ is $\l'$ itself.
As explained in \cite[\S2.4]{PremetStewart}
there is a unique $f \in \l'$ such that
$(\Ad \tau(t)) f = t^{-2}f$ for all $t \in \kk^\times$, and $[e,f]= h_\tau$.  An $\sl_2$-triple of the
form $(e,h_\tau,f)$ is called a {\em standard $\sl_2$-triple}.

Let $(e,h_\tau,f)$ be a standard $\sl_2$-triple in $\g$.
It is shown in \cite[\S2.4]{PremetStewart}, that we have $f^p = 0$,
and also that if $e^p = 0$, then $e$ is conjugate to $f$ by $G$.
The first of these facts is shown by noting that $f^p$ centralizes
$e$ and $(\Ad \tau(t)) f^p = t^{-2p}f^p$,
but $\tau$ has positive weights on $\g^e$.  The second is proved by using that $\{e,h_\tau,f\}$
spans the Lie algebra of a connected subgroup of $G$ of type $A_1$.

Since $f^{p} = 0$ we can consider $\exp(sf) \in G$ for $s \in \kk$,
see for example the start of the proof of \cite[Proposition~2.7]{PremetStewart}.
Let $\cN(e,h,f) := \{ae+bh+cf \mid a,b,c \in \kk, \ b^2 = -ac\}$
denote the image of the nilpotent cone of $\s = \sl_2(\kk)$ in $\g$.
Standard calculations show that by conjugating $e$ by $\tau(t)$ for $t \in \kk^\times$
and then by $\exp(sf) \in G$ for $s \in \kk$, we obtain that
\begin{equation} \label{e:exp}
\cN(e,h,f) \setminus \kk f =\{t^2(e-sh-s^2f) \mid t \in \kk^\times, s \in \kk\} \sub (\Ad G)e.
\end{equation}
It thus follows that $f \in \bar{(\Ad G)e}$ and so $(\Ad G)f \sub \bar{(\Ad G)e}$.

Now suppose that $e^p \ne 0$.
We can also find a standard $\sl_2$-triple $(f,h_{\tau'},e')$.
Since $f^p=0$, we have that $e'$ is conjugate to $f$, so that
$(e')^p = 0$, and thus $e'$ is not conjugate to $e$ by $G$.
Hence, the $\sl_2$-triples $(f,-h',e')$ and $(f,-h,e)$ are not conjugate by $G$.

Suppose now that $\cV \sub \cN$ is a $G$-stable closed subvariety that contains $e$, and thus
also contains $f$ and $e'$, because $(\Ad G)e' = (\Ad G)f  \sub \bar{(\Ad G)e}$.
Then we see that the map in \eqref{e:Kostantmapthm} is not injective
by considering the $G$-orbits of $(f,-h',e')$ and $(f,-h,e)$, which are distinct,
and both map to the $G$-orbit of $f$.  This argument implies
the following proposition, where in the statement we use the notation
$\cN^p := \{x \in \g \mid x^p = 0\}$.

\begin{Proposition} \label{P:cVsubcNp}
Let $\cV \sub \cN$ be a $G$-stable closed subvariety that satisfies the $\sl_2$-property.
Then $\cV \sub \cN^p$.
\end{Proposition}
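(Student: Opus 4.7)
The proof is essentially contained in the discussion preceding the proposition, and my plan is simply to formalize it as a contrapositive. Assume for contradiction that $\cV$ is a $G$-stable closed subvariety satisfying the $\sl_2$-property but $\cV \not\sub \cN^p$, so that some $e \in \cV$ has $e^p \ne 0$. The strategy is to exhibit two $G$-orbits of $\sl_2$-triples with all entries in $\cV$ that have the same image under the map \eqref{e:Kostantmapthm}, violating injectivity.

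The first step is to attach to $e$ a standard $\sl_2$-triple $(e, h_\tau, f)$ via the optimal cocharacter construction recalled in this subsection; this yields $f^p = 0$. Using \eqref{e:exp} I conclude $f \in \overline{(\Ad G)e}$, and since $\cV$ is closed and $G$-stable with $e \in \cV$, this forces $f \in \cV$.

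I then repeat the construction with $f$ in place of $e$ to produce a second standard $\sl_2$-triple $(f, h_{\tau'}, e')$. The standard-triple property applied to $f$ gives $(e')^p = 0$, and \eqref{e:exp} applied to this second triple yields $e' \in \overline{(\Ad G)f} \sub \cV$. Because $e^p \ne 0$ while $(e')^p = 0$, the elements $e$ and $e'$ must lie in distinct $G$-orbits.

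Finally, the triples $(f, -h_\tau, e)$ and $(f, -h_{\tau'}, e')$ are both $\sl_2$-triples in $\g$ with all entries in $\cV$, and both are sent to the $G$-orbit of $f$ by the map in \eqref{e:Kostantmapthm}. If they were $G$-conjugate, their third components $e$ and $e'$ would lie in a common $G$-orbit, contradicting the preceding step; hence \eqref{e:Kostantmapthm} is not injective on $\cV$, contradicting the $\sl_2$-property. The only point needing attention in this outline is verifying that $f$ and $e'$ really lie in $\cV$, and this is immediate from \eqref{e:exp} combined with the closedness and $G$-stability of $\cV$, so there is no serious obstacle.
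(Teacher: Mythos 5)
Your proposal is correct and follows essentially the same route as the paper's own discussion preceding the proposition: form a standard triple $(e,h_\tau,f)$ with $f^p=0$, use \eqref{e:exp} to get $f\in\overline{(\Ad G)e}\sub\cV$, form a second standard triple $(f,h_{\tau'},e')$ with $(e')^p=0$ and similarly conclude $e'\in\cV$, and then observe that $(f,-h_\tau,e)$ and $(f,-h_{\tau'},e')$ are non-conjugate $\sl_2$-triples in $\cV$ mapping to the same orbit. The only cosmetic difference is that the paper gets $e'\in\cV$ via the fact that $e'$ is actually $G$-conjugate to $f$ (so $(\Ad G)e'=(\Ad G)f$), whereas you use \eqref{e:exp} a second time to place $e'$ in $\overline{(\Ad G)f}$; both are valid and rely on the same ingredients.
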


\begin{Remark} \label{R:uniquemax}
We explain how Proposition~\ref{P:cVsubcNp} can be used to give a general argument
to prove that there is a unique maximal $G$-stable closed variety $\cV$ of $\cN$
that satisfies the $\sl_2$-property.

Suppose that $\cV$ and $\cV'$ are two such maximal $G$-stable closed subvarieties of $\cN$.  We consider
$\cV \cup \cV'$, which is a $G$-stable closed subvariety of $\cN$, and
it suffices to show that it satisfies the $\sl_2$-property.
Let $(e,h,f)$ and $(e,h',f')$ be $\sl_2$-triples in $\g$ with $e,f,f' \in \cV \cup \cV'$.
Without loss of generality we assume that $e \in \cV$.  By Proposition~\ref{P:cVsubcNp},
we have that $\cV \cup \cV' \sub \cN^p$, and thus $f \in \cN^p$ so that $f^p = 0$.
Thus we can apply the exponentiation argument above
to obtain \eqref{e:exp}, and deduce that $f \in \cV$.  Similarly we can deduce that $f' \in \cV$.  Hence,
as $\cV$ satisfies the $\sl_2$-property, we have that $(e,h,f)$ and $(e,h',f')$
are conjugate by $G$.  Therefore, we have that $\cV \cup \cV'$ satisfies the
$\sl_2$-property as required.
\end{Remark}

\subsection{\texorpdfstring{$\sl_2$}{sl\_2}-triples for \texorpdfstring{$\SL_p(\kk)$}{SL\_p(k)}} \label{ss:SLp}

We consider $\sl_2$-triples for $\SL_p(\kk)$ and recap the known result that for the case $e \sim (p)$
there are multiple $\sl_2$-triples $(e,h,f)$ in $\sl_p(\kk)$ up to conjugacy by $\SL_p(\kk)$.
We present just two non-conjugate such $\sl_2$-triples, but
note that by using the baby Verma module as described in \S\ref{ss:sl2modules}, it
can be shown that there is an infinite family of non-conjugate such $\sl_2$-triples.  In Proposition~\ref{P:SLp}
we explain how this restricts the possible subvarieties $\cV$ of $\cN$ that satisfy the
$\sl_2$-property.  We use the notation from \S\ref{ss:sl2modules} throughout this subsection.

We let $(e_0,h_0,f) := (e_{Z(0)},h_{Z(0)},f_{Z(0)})$ be the
$\sl_2$-triple in $\gl(Z(0))$ determined by the baby Verma module $Z(0)$.  We view
$(e_0,h_0,f)$ as an $\sl_2$-triple in $\sl_p(\kk)$ using the basis of $Z(0)$ given
in \S\ref{ss:sl2modules}.
Similarly there is an $\sl_2$-triple $(e_{p-1},h_{p-1}, f)$ in
$\sl_p$ determined by the baby Verma module $Z(p-1)$ and the basis of $Z(p-1)$ given in \S\ref{ss:sl2modules}.
We note that the $f$ in these $\sl_2$-triples is the same, and that
$e_0 \sim (p-1,1)$ and $e_{p-1} \sim (p)$.  Therefore, the $\sl_2$-triples $(e_0,h_0, f)$
and $(e_{p-1},h_{p-1}, f)$ are not conjugate by $\SL_p(\kk)$, and thus the $\sl_2$-triples
$(f,-h_0, e_0)$ and $(f,-h_{p-1}, e_{p-1})$ are not
conjugate by $\SL_p(\kk)$.  We note here that this implies that $\cV = \cN$ does not satisfy the
$\sl_2$-property for the case $G = \SL_p(\kk)$, as the $\SL_p(\kk)$-orbits of the $\sl_2$-triples
$(f,-h_0, e_0)$ and $(f,-h_{p-1}, e_{p-1})$ are distinct, and map to the same $\SL_p(\kk)$-orbit
under the map in \eqref{e:KostantmapV}.

Let $G$ be one of $\GL_n(\kk)$, $\SL_n(\kk)$, $\Sp_n(\kk)$, $\O_n(\kk)$ or $\SO_n(\kk)$.
Suppose that $G$ has a Levi subgroup $L$ whose derived subgroup $L'$ is
isomorphic to $\SL_p(\kk)$.  By identifying the Lie algebra $\l'$ of $L'$ with $\sl_p(\kk)$,
we may consider the $\sl_2$-triples $(e_0,h_0, f)$
and $(e_{p-1},h_{p-1}, f)$ inside $\g$.  In the cases where $G$ is one of $\GL_n(\kk)$ or $\SL_n(\kk)$,
we have that $e_0 \sim (p-1,1^{n-p+1})$, $e_{p-1} \sim (p,1^{n-p})$ and $f \sim (p,1^{n-p})$; whilst in
the cases $G$ is one of $\Sp_n(\kk)$, $\O_n(\kk)$ or $\SO_n(\kk)$, we have that
$e_0 \sim ((p-1)^2,1^{n-2p+2})$, $e_{p-1} \sim (p^2,1^{n-2p})$ and $f \sim (p^2,1^{n-2p})$.  Thus we see
that $e_0$ is not conjugate to $e_{p-1}$ by $G$, and thus the $\sl_2$-triples $(e_0,h_0, f)$
and $(e_{p-1},h_{p-1}, f)$ are not conjugate by $G$.  Therefore, the $\sl_2$-triples
$(f,-h_0, e_0)$ and $(f,-h_{p-1}, e_{p-1})$ are also not conjugate by $G$.  Hence, if $\cV$ is a $G$-stable
closed subvariety of $\cN$ that contains $f$, then we see that the map in \eqref{e:Kostantmapthm}
is not a bijection, so that $\cV$ does not satisfy the $\sl_2$-property.

As a consequence of the above discussion we obtain the following proposition.

\begin{Proposition} \label{P:SLp}
Let $G$ be one of $\GL_n(\kk)$, $\SL_n(\kk)$, $\Sp_n(\kk)$, $\O_n(\kk)$ or $\SO_n(\kk)$,
and let $\cV$ be a $G$-stable closed subvariety of $\cN$.  Suppose that $\cV$ contains
an element of Jordan type $(p,1^{n-p})$ (respectively $(p^2,1^{n-2p})$) if $G$ is
one of $\GL_n(\kk)$ or $\SL_n(\kk)$ (respectively $\Sp_n(\kk)$, $\O_n(\kk)$ or $\SO_n(\kk)$).
Then $\cV$ does not satisfy the $\sl_2$-property.
\end{Proposition}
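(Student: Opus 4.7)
The statement is essentially a formalization of the discussion immediately preceding it, so my plan is to gather those observations into a short, systematic argument.

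First I would fix $e \in \cV$ of the stipulated Jordan type. The strategy is to exhibit two non-$G$-conjugate $\sl_2$-triples that both map to the $G$-orbit of $e$ under the map in \eqref{e:KostantmapV}, using the construction from baby Verma modules $Z(0)$ and $Z(p-1)$ already set up in the preceding paragraphs. To do this I need an embedding of $\sl_p(\kk)$ into $\g$ under which the element $f$ of that construction becomes $G$-conjugate to $e$.

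Concretely, I would select a Levi subgroup $L$ of $G$ whose derived subgroup $L'$ is isomorphic to $\SL_p(\kk)$: for $\GL_n(\kk)$ or $\SL_n(\kk)$ a Levi of type $\GL_p \times \GL_1^{n-p}$; for $\Sp_n(\kk)$ a Levi of type $\GL_p \times \Sp_{n-2p}$; for $\O_n(\kk)$ or $\SO_n(\kk)$ a Levi of type $\GL_p \times \O_{n-2p}$. Identifying $\l'$ with $\sl_p(\kk)$, the construction in the preceding paragraphs produces $\sl_2$-triples $(e_0,h_0,f)$ and $(e_{p-1},h_{p-1},f)$ inside $\l' \sub \g$. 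I then compute the Jordan types of $e_0$, $e_{p-1}$ and $f$ as elements of $\g$ from the decomposition of the natural $G$-module as a module for this Levi: in the $\GL_n$/$\SL_n$ case this gives $e_0 \sim (p-1,1^{n-p+1})$ and $e_{p-1} \sim f \sim (p,1^{n-p})$, and in the $\Sp_n$/$\O_n$/$\SO_n$ case the natural module restricts to the sum of the natural $\SL_p$-module, its dual, and trivials, giving $e_0 \sim ((p-1)^2,1^{n-2p+2})$ and $e_{p-1} \sim f \sim (p^2,1^{n-2p})$.

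Since $\cV$ is $G$-stable and closed, and contains an element of the same Jordan type as $f$, it contains the entire $G$-orbit of $f$; in particular it contains $f$ itself (replacing our Levi by a $G$-conjugate if needed). Note that for $G = \SO_n(\kk)$ the partition $(p^2,1^{n-2p})$ is not very even, since $p$ is odd, so there is no ambiguity arising from split $\SO_n$-orbits. Now the triples $(f,-h_0,e_0)$ and $(f,-h_{p-1},e_{p-1})$ both have first entry $f \in \cV$, and both have third entries satisfying $e_0^{p-1}=0$ and $e_{p-1}^{p-1}\ne 0$ (or can be distinguished by Jordan type in $\g$), so they are not $G$-conjugate; but both map under \eqref{e:Kostantmapthm} to the $G$-orbit of $f$, which lies in $\cV$. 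Provided also that $e_0, e_{p-1} \in \cV$, the map fails to be injective.

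The only subtle point is the last clause: one must ensure the two triples live entirely in $\cV$. The cleanest way is to note that since $\cV$ is $G$-stable closed and contains $f$, it contains the orbit closure $\bar{(\Ad G)f}$; by the Levi computation $(\Ad G)e_{p-1} = (\Ad G)f$, and by Theorem~\ref{T:closureorder} $e_0$ lies in $\bar{(\Ad G)f}$ because its Jordan type is dominated by that of $f$. Hence both source triples have all three entries in $\cV$, which completes the failure of the $\sl_2$-property. The main obstacle, if any, is bookkeeping the Jordan types after restricting the natural $G$-module to the chosen Levi; everything else is an immediate transcription of the preceding discussion.
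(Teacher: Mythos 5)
Your proposal is correct and follows the same argument the paper gives in the paragraph preceding Proposition~\ref{P:SLp}: embed $\sl_p(\kk)$ in $\g$ via a Levi subalgebra, transport the non-conjugate triples $(f,-h_0,e_0)$ and $(f,-h_{p-1},e_{p-1})$ into $\g$, and distinguish them by the Jordan types of $e_0$ and $e_{p-1}$. You are somewhat more careful than the paper in explicitly verifying via Theorem~\ref{T:closureorder} that $e_0,e_{p-1}\in\cV$, and in noting that $(p^2,1^{n-2p})$ is not very even so the $\SO_n(\kk)$-orbit issue does not arise, both of which the paper leaves implicit.
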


We remark that a strengthening of Proposition~\ref{P:SLp} to incorporate groups of
exceptional type is also known, though we choose not to go into that here.

\section{The general and special linear groups} \label{S:glsl}

For the main part of this section we consider the case $G = \GL_n(\kk)$ and work towards proving
Theorem~\ref{T:classification}(a).  Then in \S\ref{ss:sl} we consider the
case $G = \SL_n(\kk)$ and deduce Theorem~\ref{T:classification}(b).

To prove Theorem~\ref{T:classification}(a) we work with the algebra
\[
A:= U(\s)/\langle e^{p-1} , f^{p-1} \rangle,
\]
where we recall that $\s = \sl_2(\kk)$.
In Corollary \ref{C:Asemisimple} we see that $A$ is semisimple, we note that this is given by \cite[Theorem 1]{JacobsonTDSLA},
though we give an alternative proof.

We note that we write elements of $A$ as linear combinations of monomials in $e$, $h$ and $f$,
so there is a possibility of a conflict of notation
with elements of $U(\s)$. However, when considering elements, of $U(\s)$ or $A$,
we ensure it is clear from the context which algebra they are from.

\subsection{Simple \texorpdfstring{$A$}{A}-modules and a lower bound for the dimension of \texorpdfstring{$A$}{A}} \label{ss:lowerbound}

In the following lemma we give a set of pairwise non-isomorphic simple $A$-modules.
The simple $\s$-modules
$V(d)$ in the statement of the lemma are as recalled in \S\ref{ss:sl2modules}.

\begin{Lemma} \label{L:simplemodules}
$V(0)$, $V(1)$, \dots ,$V(p-2)$ are pairwise non-isomorphic simple $A$-modules.
\end{Lemma}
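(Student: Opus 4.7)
The plan is to deduce everything from the classification of simple $\s$-modules recalled in \S\ref{ss:sl2modules}, together with the explicit action of $e$ and $f$ on the baby Verma module $Z(d)$.

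First I would observe that for $d \in \{0,1,\dots,p-2\}$ the simple $\s$-module $V(d)$ has dimension $d+1 \le p-1$. From the formulas for the action of $e$ and $f$ on the basis $v_0,\dots,v_{p-1}$ of $Z(d)$ given in \S\ref{ss:sl2modules}, the vector $v_{d+1}$ is a maximal vector (killed by $e$ and of weight $-d-2$), so the unique simple quotient $V(d)$ is spanned by the images of $v_0,\dots,v_d$, with $f$ shifting $v_i \mapsto v_{i+1}$ for $i < d$ and $v_d \mapsto 0$, and $e$ shifting downwards and killing $v_0$. Hence $f_{V(d)}^{d+1}=0$ and $e_{V(d)}^{d+1}=0$. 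Since $d+1\le p-1$, both $e^{p-1}$ and $f^{p-1}$ act as zero on $V(d)$, so the $U(\s)$-action on $V(d)$ factors through $A$, making $V(d)$ an $A$-module.

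Next I would note that simplicity of $V(d)$ as an $A$-module is immediate from its simplicity as an $\s$-module: a proper $A$-submodule would be a proper $U(\s)$-submodule, which cannot exist. Finally, the modules $V(0),V(1),\dots,V(p-2)$ are pairwise non-isomorphic already as $\s$-modules because $\dim V(d)=d+1$ and these dimensions are all distinct, which forces non-isomorphism as $A$-modules as well.

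There is essentially no obstacle here; the only step that requires a sentence of care is checking that $e^{p-1}$ and $f^{p-1}$ annihilate $V(d)$, which reduces to the dimension bound $d+1 \le p-1$. No further input beyond \S\ref{ss:sl2modules} is needed, and the statement follows in a few lines.
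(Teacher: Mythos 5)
Your proof is correct and follows essentially the same approach as the paper: show that $e^{d+1}$ and $f^{d+1}$ kill $V(d)$ (hence $e^{p-1}$ and $f^{p-1}$ do), conclude that $V(d)$ is a simple $A$-module, and distinguish them by dimension. The paper states the annihilation fact without deriving it from the baby Verma module formulas as you do, but the substance is identical.
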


\begin{proof}
For $0 \le d \le p-2$, we have that $e^{d+1}$ and $f^{d+1}$ act as zero on $V(d)$.
Hence $e^{p-1}$ and $f^{p-1}$ act as zero on $V(d)$, thus $V(d)$ is an $A$-module,
and it is simple as an $A$-module as it is simple as an $\s$-module.
For $c \ne d$ we have that $V(c)$ and $V(d)$ have different dimensions, so are certainly
not isomorphic.
\end{proof}

In the following corollary we establish a lower bound for $\dim(A/\rad{A})$, where $\rad{A}$ denotes
the Jacobson radical of $A$.  We achieve this by applying Wedderburn's theorem to the semisimple
algebra $A/\rad{A}$.  For background on the representation theory used here we refer
to \cite[Sections 3 and 5]{CurtisReiner}.

\begin{Corollary} \label{C:lowerbound}
The dimension of $A/\rad{A}$ is greater than
or equal to $\sum_{i=1}^{p-1} i^2$.
\end{Corollary}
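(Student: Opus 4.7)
The plan is to apply the Artin–Wedderburn theorem to the finite-dimensional semisimple algebra $A/\rad A$ over the algebraically closed field $\kk$, and then use Lemma~\ref{L:simplemodules} to bound the contribution from known simple modules.

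Concretely, since $A$ is finite-dimensional (it is a quotient of the PBW-finite algebra $U_{\star,0}(\s)$ by the ideal generated by $e^{p-1}$ and $f^{p-1}$), the quotient $A/\rad A$ is a finite-dimensional semisimple $\kk$-algebra. By Artin–Wedderburn together with the fact that $\kk$ is algebraically closed (so the only finite-dimensional division $\kk$-algebra is $\kk$ itself), we have
\begin{equation*}
A/\rad A \ \iso\ \bigoplus_{[V]} \End_\kk(V),
\end{equation*}
where the sum ranges over isomorphism classes of simple $A$-modules $V$, giving
\begin{equation*}
\dim(A/\rad A) \ =\ \sum_{[V]} (\dim V)^2.
\end{equation*}
This is the standard content of \cite[Sections 3 and 5]{CurtisReiner} cited just above.

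By Lemma~\ref{L:simplemodules}, the modules $V(0), V(1), \dots, V(p-2)$ are pairwise non-isomorphic simple $A$-modules, with $\dim V(d) = d+1$ for $d \in \{0, 1, \dots, p-2\}$. These therefore contribute at least
\begin{equation*}
\sum_{d=0}^{p-2} (d+1)^2 \ =\ \sum_{i=1}^{p-1} i^2
\end{equation*}
to the sum of squares above. Since any additional isomorphism classes of simple $A$-modules only increase the sum, we conclude $\dim(A/\rad A) \ge \sum_{i=1}^{p-1} i^2$, as required.

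There is essentially no obstacle here: the result is a direct consequence of Artin–Wedderburn applied to a semisimple quotient of a finite-dimensional algebra, combined with the explicit supply of simple modules provided by Lemma~\ref{L:simplemodules}. The only thing worth double-checking is finite-dimensionality of $A$, which follows from the PBW basis description of $U_{\star,0}(\s)$ recalled in \S\ref{ss:sl2modules} together with the observation that $A$ is a further quotient of $U_{\star,0}(\s)$.
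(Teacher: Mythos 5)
Your argument is correct and is essentially identical to the paper's: both invoke Wedderburn for the semisimple quotient $A/\rad A$ over the algebraically closed field $\kk$ and then feed in the simple modules $V(0),\dots,V(p-2)$ supplied by Lemma~\ref{L:simplemodules} to get the sum-of-squares lower bound. Your remark on finite-dimensionality of $A$ (needed so that $A/\rad A$ is an Artinian semisimple algebra to which Artin--Wedderburn applies) is a small but welcome extra precision that the paper leaves implicit.
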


\begin{proof}
We have that $A/\rad{A}$ is semisimple and using Lemma~\ref{L:simplemodules} we have that $V(0)$, $V(1)$, \dots $,V(p-2)$  are simple modules for $A/\rad{A}$.
From Wedderburn's theorem we deduce that
\begin{align*}
\dim(A/\rad{A}) \ge \dim(V(0))^2 + \dim(V(1))^2 + \dots + \dim(V(p-2))^2 = \sum_{i=1}^{p-1} i^2. & \qedhere
\end{align*}
\end{proof}

\subsection{A spanning set for \texorpdfstring{$A$}{A} and an upper bound for the dimension of \texorpdfstring{$A$}{A}} \label{ss:spanningset}

We define the subsets
\[
S_k := \{ f^a h^k e^c \mid a, c < p-1-k \}
\]
of $A$ for each $k <p-1$.
The following proposition is proved at the end of this subsection.

\begin{Proposition} \label{P:spanningset}
$S := \bigcup_{k=0}^{p-2} S_k $ is a spanning set for $A$.
\end{Proposition}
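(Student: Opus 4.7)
By PBW, $U(\s)$ has basis $\{f^a h^b e^c : a, b, c \geq 0\}$, so $A$ is spanned by the images of these monomials. The relations $e^{p-1} = 0$ and $f^{p-1} = 0$ immediately restrict us to $0 \le a, c \le p-2$. The task is therefore to show that each surviving monomial $f^a h^b e^c$ with $b > p - 2 - \max(a, c)$ lies in the $\kk$-span of $S$.

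The engine of the reduction is a family of relations extracted from the identities $e^r f^{p-1} = 0$ in $A$. Starting with $r = 1$: the bracket formula $[e, f^n] = n f^{n-1}(h - n + 1)$, an easy induction on $n$ from $[e,f] = h$, combined with $ef^{p-1} = 0$ and the congruence $p-1 \equiv -1$, yields $f^{p-2}(h + 2) = 0$ in $A$; dually $e^{p-2}(h - 2) = 0$. These already handle the outermost monomials, since $f^{p-2} h^b e^c = (-2)^b f^{p-2} e^c \in S_0$ and symmetrically for $f^a h^b e^{p-2}$. Iterating by expanding $e^r f^{p-1}$ for $r = 2, 3, \ldots, p-1$ and putting intermediate expressions back into PBW form using the relations already obtained, I would extract, for each $a \in \{0, 1, \ldots, p-2\}$, a relation of the shape
\[
f^a \, Q_a(h) \;=\; \sum f^{a'} h^{b'} e^{c'}, \qquad a' > a,
\]
where $Q_a(h) \in \kk[h]$ has degree exactly $p - 1 - a$ with nonzero leading coefficient. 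A mirror family $Q'_c(h)\, e^c = \sum f^{a'} h^{b'} e^{c'}$ with $c' > c$ comes from $e^{p-1} f^s = 0$.

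With these relations in hand, the reduction proceeds by induction on a complexity statistic, for example the lexicographic order on the pair $(\max(a, c) + b,\, \min(a, c))$. If $f^a h^b e^c \notin S$, then $b > p - 2 - \max(a, c)$; assuming WLOG $a \geq c$, multiplying the relation $f^a Q_a(h)$ on the right by $e^c$ lets one solve for $f^a h^{p-1-a} e^c$ as a combination of $f^a h^{b''} e^c$ with $b'' < p - 1 - a$ together with terms of strictly larger $a$-power, all strictly smaller in the chosen order. The case $c > a$ is handled by the mirror family. Induction then completes the proof.

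The main obstacle is the combinatorial bookkeeping in the iterative extraction of the polynomials $Q_a$: one must verify that each step yields a $Q_a$ of degree exactly $p - 1 - a$ with nonzero leading coefficient, matching precisely the threshold at which membership in $S$ fails. The numerical coincidence $|S| = \sum_{j=1}^{p-1} j^2$ matching the lower bound of Corollary~\ref{C:lowerbound} is strong evidence that the degrees line up correctly, and (once spanning is established) will force $\rad A = 0$ and $S$ to be a basis of $A$.
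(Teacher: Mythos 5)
Your overall strategy mirrors the one the paper uses in spirit---both work in the PBW basis of $U(\s)$ and mine the defining relations $e^{p-1}=0=f^{p-1}$ of $A$ to push monomials with a high power of $h$ back into $S$---but your concrete route is different. The paper's Lemma~\ref{L:spanning} does a direct induction on the power $k$ of $h$, bracketing elements already known to be in the span with $f$ and comparing the two expressions for $[f^a h^{k-1} e^{p-k},f]$; you instead propose to expand $e^r f^{p-1}=0$ for each $r$ to extract a family of relations $f^a Q_a(h) = \sum f^{a'}h^{b'}e^{c'}$ with $a'>a$. Both routes can be made to work, so the difference is one of execution. However, your write-up has two genuine gaps.

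First, and decisively, the complexity statistic you propose does \emph{not} decrease. Consider the relation one extracts for $a=p-3$, namely $f^{p-3}(h+3)(h+2) = f^{p-2}(h+1)e$ (obtained from $e^2f^{p-1}=0$). Multiplying by $e$ to attack the monomial $f^{p-3}h^2e$ produces, among other things, the term $f^{p-2}he^2$. With your statistic $\bigl(\max(a,c)+b,\ \min(a,c)\bigr)$, the starting monomial has value $(p-1,\,1)$ while $f^{p-2}he^2$ has value $(p-1,\,2)$: the second coordinate \emph{increases}, so the lexicographic order goes up, not down, and the claimed well-foundedness of the recursion fails as stated. The fix is not hard---observe that in each extracted relation the $h$-exponents $b'$ on the right satisfy $b' < p-1-a = \deg Q_a$ (this is visible from the filtered structure of $e^r f^{p-1}$), so one can induct simply on the power of $h$---but the assertion that ``all [terms are] strictly smaller in the chosen order'' is false and needs replacing.

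Second, as you acknowledge yourself, the entire argument rests on the claim that for each $a$ the polynomial $Q_a$ has degree exactly $p-1-a$ with nonzero leading coefficient in characteristic $p$, and you do not prove this. This is not a side issue: it is exactly the content of the paper's Lemma~\ref{L:spanning}, where the corresponding threshold is established by computing $[e^k,f]$ explicitly and tracking the coefficient $k$, which is invertible precisely because $0<k<p$. A complete proof would need to expand $e^{p-1-a}f^{p-1}$ in PBW form, identify the leading coefficient as $(p-1)(p-2)\cdots(a+1) = \pm(p-1-a)!$ modulo $p$, and check this is nonzero---routine, but it is the heart of the matter rather than ``bookkeeping.''

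In short: the plan is plausible and closely parallel to the paper's, but as written the termination argument is broken and the key quantitative lemma is asserted rather than proved.
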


We note that $|S_k| = (p-1-k)^2$, thus $|S| = \sum_{i=1}^{p-1} i^2$, which is equal to the
lower bound of $A/\rad A$ given in Corollary~\ref{C:lowerbound}. Therefore, by combining
Corollary~\ref{C:lowerbound} and Proposition~\ref{P:spanningset}, we are able
to deduce that $S$ is a basis of $A$ so $\dim(A) = \sum_{i=1}^{p-1} i^2 $.  Further,
we have that $\rad{A} = 0$, so that $A$ is semisimple.  Hence
we have that $\{V(0), V(1), \dots, V(p-2)\}$ is a complete set
of inequivalent simple $A$-modules.  This is all stated in Corollary~\ref{C:Asemisimple},
and is then used to prove that $\cN^{p-1}$ satisfies the $\sl_2$-property in Corollary~\ref{C:gl}.

In order to show that $S$ is a spanning set for $A$, we start with a lemma
which gives some relations in $U(\s)$.

\begin{Lemma} \label{L:powers}
Within $U(\s)$, for any $k \in \Z_{>0}$, we have
\begin{enumerate}
    \item $[e^k,h] = -2ke^k$,
    \item $[ e^k, f] = khe^{k-1} - k(k-1) e^{k-1}$, and
    \item $[h^k,f] \in \sspan\{ fh^i \mid i = 1,\dots,k-1\}$.
\end{enumerate}
\end{Lemma}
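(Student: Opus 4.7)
The plan is to prove each of the three identities by induction on $k$, using only the defining relations $[h,e]=2e$, $[h,f]=-2f$, $[e,f]=h$ of $\s$ inside $U(\s)$ together with the Leibniz rule $[xy,z]=x[y,z]+[x,z]y$.

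For part (a), the base case $k=1$ is just $[e,h]=-2e$. For the inductive step I would write
\[
[e^{k+1},h] = e[e^k,h] + [e,h]e^k = -2k\,e^{k+1} - 2e^{k+1} = -2(k+1)e^{k+1}
\]
using the inductive hypothesis and the fact that $[e,h]=-2e$ commutes past $e^k$ in the expected way.

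For part (b), the base case $k=1$ reads $[e,f]=h = 1\cdot h\cdot e^0 - 1\cdot 0\cdot e^0$. For the inductive step I would compute
\[
[e^{k+1},f] = e[e^k,f] + [e,f]e^k = e\bigl(khe^{k-1} - k(k-1)e^{k-1}\bigr) + h e^k,
\]
then move the $e$ past the $h$ using $eh = he - 2e$, which is immediate from part (a) with $k=1$. This yields $khe^k - 2ke^k - k(k-1)e^k + he^k = (k+1)he^k - k(k+1)e^k$, which is exactly the claim for $k+1$. The only mild subtlety is getting the arithmetic right in the coefficient of $e^k$, namely $2k + k(k-1) = k(k+1)$.

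For part (c), I would again induct on $k$, reading the right-hand side as the span of monomials of the form $fh^i$ with $i$ strictly less than $k$. The base case follows from $[h,f]=-2f$. For the inductive step, write
\[
[h^{k+1},f] = h[h^k,f] + [h,f]h^k = h[h^k,f] - 2fh^k.
\]
By induction $[h^k,f]$ is a linear combination of elements $fh^i$ with $i<k$; to finish I only need to bring the outer $h$ past each $f$ using $hf = fh - 2f$, which replaces $hfh^i$ with $fh^{i+1} - 2fh^i$. Each resulting monomial has the form $fh^j$ with $j\le k$, so $[h^{k+1},f]$ lies in the required span. The only potential obstacle is purely bookkeeping: ensuring the final indexing range matches the statement (in particular that the $-2fh^k$ term is absorbed into the span and that no term with exponent $\ge k+1$ appears), which is automatic from the computation above. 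No deeper tool than the Leibniz rule and the three structure constants of $\s$ is needed.
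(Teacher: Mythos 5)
Your proof is correct, and it reaches the conclusion by the same basic means (induction plus the Leibniz rule), but with a slightly different emphasis for parts (a) and (c). For (a) the paper argues in one line that $e^k$ is an $\ad h$-eigenvector of weight $2k$, which immediately gives $[e^k,h]=-2ke^k$; your Leibniz induction produces the same identity somewhat more explicitly. For (b) the two proofs essentially coincide, differing only in which factor of $e^{k+1}$ is peeled off first (the paper splits $e^k\cdot e$, you split $e\cdot e^k$), and your coefficient bookkeeping $2k+k(k-1)=k(k+1)$ is right. For (c) the paper avoids computation: it notes that $[h^k,f]$ lies in $U(\b_-)$, has PBW degree at most $k$, and is an $\ad h$-eigenvector of weight $-2$, and then reads off the allowed PBW monomials; your inductive calculation using $hf=fh-2f$ is more hands-on but arrives at the same place with no extra ingredients. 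Either route is fine; the paper's treatment of (a) and (c) is shorter, yours is more elementary.

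One detail worth flagging: you read the conclusion of (c) as ``$[h^k,f]$ lies in the span of $fh^i$ with $i<k$'', so in particular allowing $i=0$. That is what is actually true and needed here (for instance $[h^2,f]=4f-4fh$ has a nonzero $f$-component), and it is also what the paper's own degree-and-weight argument yields; the printed index range $i=1,\dots,k-1$ in the lemma statement is a small misprint, which your reading silently corrects.
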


\begin{proof}
(a) We note that $e$ is an eigenvector of $\ad h$ with eigenvalue $2$, so
$e^k$ is an eigenvector of $\ad h$ with eigenvalue $2k$.  Thus $[e^k,h] = -2ke^k$.

\smallskip
\noindent
(b) We use a simple induction on $k$.
For $k=1$, we have $[e,f] =h$. Suppose that $[ e^k, f] = khe^{k-1} - k(k-1) e^{k-1}$.  Then
\[
[e^{k+1},f] = [e^k,f] e + e^k [e,f] = khe^k - k(k-1)e^k +e^kh.
\]
From (a) we have that $e^kh = he^k - 2ke^k$, hence $ [e^{k+1},f]$ is equal to
\[
khe^k -k(k-1)e^k +he^k -2ke^k = (k+1) he^k - (k^2 + k) e^k = (k+1)he^k -k(k+1) e^k.
\]

\smallskip
\noindent
(c) Let $\b_- := \kk f \oplus \kk h$, which is a Borel subalgebra of $\s$.  We have
that $[h^k,f]$ is an element of $U(\b_-)$, and has degree $k$ for the
PBW filtration.  Also let $\tau : \kk^\times \to S = \SL_2(\kk)$ be the
cocharacter such that $\Ad(\tau(t))e = t^2e$, $\Ad(\tau(t))h = h$ and
$\Ad(\tau(t))f = t^{-2}f$.  Then $[h^k,f]$ is an eigenvector of $\ad h$ with eigenvalue $-2$.
Then for the extension of the action of $S$ on $\s$ to $U(\s)$, we see that
$\Ad(\tau(t))[h^k,f] = t^{-2}[h^k,f]$.
Thus by considering a PBW basis of $U(\b_-)$ we must have
that $[h^k,f]$ lies in the span of $\{ fh^i \mid i = 1,\dots,k-1\}$.
\end{proof}

We move on to prove a lemma giving spanning properties of the sets $S_k$.
Before stating and proving this lemma we explain, in the following remark, how we use an
antiautomorphism of $U(\s)$ to reduce the amount of work required.

\begin{Remark} \label{R:equivalencyofeandf}
Consider the antiautomorphism $\sigma : U(\s) \to U(\s) $ determined by
\[
\sigma(e) = f, \  \sigma(h) = h, \ \sigma(f) =e.
\]
So for any $a,b,c \in \Z_{\ge 0}$ we have
\[
\sigma(f^ah^be^c) = \sigma(e)^c\sigma(h)^b\sigma(f)^a = f^ch^be^a.
\]
As $\sigma$ stabilizes $\langle e^{p-1},f^{p-1}\rangle$ it gives an automorphism
of $A$, which we also denote by $\sigma$.
Using $\sigma$ we have that for any relation in $A$ we can find an equivalent
relation where the powers of $e$ and $f$ are swapped.  More precisely, if we have
some $r_{a,b,c} \in \kk$ such that
$\sum_{a,b,c} r_{a,b,c} f^a h^b e^c = 0$, then
\[
0 = \sigma \left(\sum_{a,b,c} r_{a,b,c} f^a h^b e^c \right) =
\sum_{a,b,c} r_{a,b,c} f^c h^b e^a.
\]
Using this, we note that for any relation on elements of $A$
written in the form of a linear combination of monomials $f^a h^b e^c$, there
is a another relation determined by swapping the powers of $e$ and $f$.
We also observe here that $S_k$ is stable under $\sigma$ for all $k$.
\end{Remark}

\begin{Lemma} \label{L:spanning}
Let $k \in \Z_{\ge 0}$.  Within $A= U(\s)/\langle e^{p-1} , f^{p-1} \rangle$, we have
\begin{enumerate}
    \item if $k < p$ and either $a \ge p-1-k$ or $c \ge p-1-k$, then $f^a h^k e^c \in \sspan(S_0 \cup \dots \cup S_{k-1}),$ and
    \item if $k \ge p$, then $f^a h^k e^c \in \sspan(S_0 \cup \dots \cup S_{p-2})$ for any $a,c \geq 0$.
\end{enumerate}
\end{Lemma}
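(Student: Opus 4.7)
The proof is by induction on $k$, with Remark~\ref{R:equivalencyofeandf} reducing part (a) to the case $c \ge p - 1 - k$ (the symmetric case $a \ge p - 1 - k$ follows by applying $\sigma$ to the resulting expansion). The base case $k = 0$ is the observation that $f^a e^c = f^a e^{c - (p-1)} \cdot e^{p-1}$ lies in the defining ideal of $A$ whenever $c \ge p - 1$, and hence vanishes in $A$.

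For the inductive step of (a), fix $1 \le k \le p - 1$ and $c \ge p - 1 - k$. The sub-case $c \ge p - 1$ vanishes in $A$ as above; otherwise write $c = p - 1 - j$ with $1 \le j \le k$. The key ingredient is a family of relations in $A$, one for each $1 \le j \le p - 1$, of the shape
\[
\pi_j(h) \, e^{p - 1 - j} \;=\; R_j,
\]
where $\pi_j(h) := (h+2)(h+3) \cdots (h + j + 1) \in \kk[h]$ is monic of degree $j$, and $R_j$ is a linear combination of monomials $f^{j'} h^m e^{p - 1 - i}$ with $i < j$, $j' \ge 1$, and $m \le j - 1$. The case $j = 1$ is $(h+2) e^{p-2} = 0$, which follows from $[f, e^{p-1}] = -(h+2) e^{p-2}$ (Lemma~\ref{L:powers}(b)) combined with $e^{p-1} = 0$ in $A$. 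The step from $j$ to $j+1$ is obtained by applying $[\,\cdot\,, f]$ to the level-$j$ identity (which vanishes in $A$), using Lemma~\ref{L:powers}(a), (b) to commute polynomials in $h$ through powers of $e$ and $f$; the residue $p - 1 - j \equiv -(j+1) \pmod{p}$ arising from Lemma~\ref{L:powers}(b) produces the new linear factor $(h + j + 2)$ of $\pi_{j+1}$.

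Given the level-$j$ relation, write $h^j = \pi_j(h) - (\pi_j(h) - h^j)$ with $\pi_j(h) - h^j \in \kk[h]$ of degree $\le j - 1$, so that $h^j e^{p-1-j} = R_j - (\pi_j(h) - h^j)\, e^{p-1-j}$ in $A$. Multiplying on the left by $f^a h^{k - j}$ and commuting by means of the identities $h^r e^n = e^n (h + 2n)^r$ and $h^r f^s = f^s (h - 2s)^r$ (both consequences of Lemma~\ref{L:powers}) expresses $f^a h^k e^{p-1-j}$ as a linear combination of monomials $f^{a'} h^{m'} e^{c'}$ with $m' < k$. Each such monomial either already lies in some $S_{m'}$ (when $a', c' < p - 1 - m'$), or falls under the hypothesis of (a) for the smaller $h$-power $m'$ and is handled by the inductive hypothesis, placing it in $\sspan(S_0 \cup \cdots \cup S_{k-1})$.

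For part (b), with (a) available I show that $\sspan(S)$ is stable under left multiplication by each of $e, h, f$ using the identities $h f^a = f^a h - 2a f^a$, $e f^a = f^a e + a f^{a-1} h - a(a-1) f^{a-1}$, and $e h^b = (h - 2)^b e$ (direct consequences of Lemma~\ref{L:powers}); the product of a generator with any monomial in $S_{b'}$ decomposes into monomials that either lie in some $S_{b''}$ or are covered by (a). Since $1 \in S_0$, this yields $A = \sspan(S) = \sspan(S_0 \cup \cdots \cup S_{p-2})$, giving (b). The principal obstacle is the inductive derivation of the relations $\pi_j(h)\, e^{p-1-j} = R_j$: one must track coefficients modulo $p$ carefully, and the monicity of $\pi_j$ (its leading coefficient $1$ remaining a unit in $\kk$ throughout) is precisely what enables the reduction of $h^j e^{p-1-j}$ to lower $h$-powers.
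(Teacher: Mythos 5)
Your proof is correct and complete, but follows a genuinely different route from the paper's, even though the underlying engine --- iterated application of $[\,\cdot\,,f]$ starting from $e^{p-1}=0$ in $A$ --- is the same. The paper tracks only membership in the nested spans $\sspan(S_0\cup\cdots\cup S_j)$, proves a ``bracketing with $f$ raises the level by at most one'' step as in \eqref{e:equation2}, and invokes the invertibility of $k$ in $\kk$ only at the very end of the computation of $f^ah^ke^{p-1-k}$. You instead distill the iteration into an explicit family of identities $\pi_j(h)e^{p-1-j}=R_j$ in $A$ for $1\le j\le p-1$, with $\pi_j(h)=(h+2)(h+3)\cdots(h+j+1)$ monic of degree $j$. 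This makes the two units that drive the reduction visible at every step: the leading coefficient $1$ of $\pi_j$, which lets you rewrite $h^je^{p-1-j}$ as $R_j-(\pi_j(h)-h^j)e^{p-1-j}$ and strictly drop the $h$-degree; and the scalar $-(j+1)$ coming from Lemma~\ref{L:powers}(b) in passing from level $j$ to level $j+1$, a unit precisely because $j+1\le p-1<p$, consistent with the range $1\le j\le p-1$ you need. The paper's span-bookkeeping and your explicit $\pi_j$-bookkeeping are equivalent, but your version isolates the arithmetic constraints more transparently. For part (b) you also diverge: rather than the paper's iterated substitution of $h^{p-1}$ as an element of $\sspan(S_0\cup\cdots\cup S_{p-2})$, you show directly that $\sspan(S)$ is stable under left multiplication by $e$, $h$ and $f$ and contains $1$, hence equals $A$; this is cleaner and, as a bonus, subsumes Proposition~\ref{P:spanningset} in a single stroke. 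No gaps: the one place that warrants an explicit word in a final write-up is the verification that the inductive construction of the $R_j$ preserves the shape constraints ($j'\ge 1$, $h$-degree $\le j-1$, $e$-power $>p-1-j$), which you assert rather than display; it does hold, but it is the kind of bookkeeping a reader will want to see laid out.
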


\begin{proof}
We work by induction to show that for $k \in \Z_{\ge 0}$ with $k < p$, if $a \ge p-1-k$ or $c \ge p-1-k$, then
$f^a h^k e^c \in \sspan(S_0 \cup \dots \cup S_{k-1})$.

Note that this is clear for $k=0$, as $e^{p-1}=0 = f^{p-1}$ in $A$.

To demonstrate the argument we also cover the case $k=1$. We just show that $f^ahe^{p-2} \in \sspan(S_0)$ for any
$a<p-1$ as then we have the analogue for $f^{p-2}he^c$ using Remark~\ref{R:equivalencyofeandf}.
We have $e^{p-1}=0$, therefore using Lemma~\ref{L:powers}(b) we see
\[
0 = [ f^ae^{p-1} , f] = f^a[e^{p-1},f]=(p-1) f^ahe^{p-2} - (p-1)(p-2) f^ae^{p-2}
\]
hence we have
\[
f^ahe^{p-2} = -2f^ae^{p-2} \in \sspan(S_0),
\]
and we are done.

Now let $k \in \Z_{\ge 0}$ with $k < p$.
For our inductive hypothesis, we suppose that for
all $i <k$, if $a \ge p-1-i$ or $c \ge p-1-i$, then $f^ah^ie^c \in \sspan(S_0 \cup \dots \cup S_{i-1})$.
We first show that
\begin{equation} \label{e:equation7}
    f^ah^k e^{p-1-k} \in \sspan(S_0 \cup \dots \cup S_{k-1}) \ \text{ for any} \ a<p-1 .
\end{equation}

In order to show this, we first consider some arbitrary $x \in A$ and show that,
if there is some $j \le k$ such that $x \in \sspan(S_0 \cup \dots \cup S_j)$,
then $[x,f] \in \sspan(S_0 \cup \dots \cup S_{j+1})$. It is enough to show
that $[f^ah^je^c,f] \in \sspan(S_0 \cup \dots \cup S_{j+1})$ for any $j \le k$, $a,c< p-1-j$.

Using Lemma~\ref{L:powers}(c)
there exist some $a_i \in \kk$ such that
\begin{align} \label{e:firstequation}
[f^ah^je^c, f] &= f^a ([h^j,f]e^c +h^j[e^c,f]) \nonumber \\
&= f^a \left( \left( \sum_{i=1}^{j-1} a_i fh^i\right)e^c + ch^{j+1}e^{c-1} - c(c-1) h^je^{c-1}\right).
\end{align}
For $i<j\le k$ we have that $f^{a+1}h^ie^c \in \sspan(S_0 \cup \dots \cup S_{j-1})$
using our inductive hypothesis if needed.
As $a, c-1 < p-1-(j+1)$ we have $f^ah^{j+1}e^{c-1}\in S_{j+1}$, and
$f^ah^je^{c-1} \in S_j$. Hence we can conclude that each of the terms
in \eqref{e:firstequation} is in $\sspan(S_0\cup \dots \cup S_{j+1})$ and hence
\begin{equation} \label{e:equation2}
[f^ah^je^c, f] \in \sspan(S_0 \cup \dots \cup S_{j+1}).
\end{equation}

We now move on to prove \eqref{e:equation7}. By our inductive hypothesis, we have
that $f^ah^{k-1}e^{p-k} \in \sspan(S_0 \cup \dots \cup S_{k-2})$, hence
by \eqref{e:equation2} we have that $[f^ah^{k-1}e^{p-k},f] \in \sspan(S_0 \cup \dots \cup S_{k-1})$. Thus
\begin{equation} \label{e:equation4}
[f^ah^{k-1}e^{p-k},f] = f^a[h^{k-1},f]e^{p-k} + f^ah^{k-1}[e^{p-k},f] \in \sspan(S_0 \cup \dots \cup S_{k-1}).
\end{equation}
We show that the first term on the right hand side of \eqref{e:equation4}
is in $ \sspan(S_0 \cup \dots \cup S_{k-2})$.  This is done by noting that if $i < k-1$
then $ f^{a+1}h^ie^{p-k} \in S_i $ and hence in
$\sspan(S_0 \cup \dots \cup S_{k-2})$, and thus, using Lemma~\ref{L:powers}(c) we see that
\[
f^a[ h^{k-1},f]e^{p-k}
\in \sspan(S_0 \cup \dots \cup S_{k-2}).
\]
By rearranging \eqref{e:equation4} we obtain that
$f^ah^{k-1}[e^{p-k},f] \in \sspan(S_0\cup \dots \cup S_{k-1})$,
we then use Lemma~\ref{L:powers}(b) to see
\begin{equation} \label{e:equation5}
f^ah^{k-1}[e^{p-k},f]
= -kf^ah^ke^{p-k-1} - k(k+1)f^ah^{k-1}e^{p-k-1}.
\end{equation}
Note that $f^ah^{k-1}e^{p-k-1}\in S_{k-1}$. Thus, we can rearrange \eqref{e:equation5} to see
\[
kf^ah^ke^{p-k-1}  \in \sspan(S_0 \cup \dots \cup S_{k-1}).
\]
As we have assumed that $0 < k < p$, we have that $k \ne 0$ in $\kk$,
and so we deduce \eqref{e:equation7}

We next show that \eqref{e:equation7} can be used to prove that if $c \ge p-k-1$
we have $f^ah^ke^c \in \sspan(S_0 \cup \dots \cup S_{k-1})$. We know by \eqref{e:equation7}
that we can find some scalars $r_{i,b,j} \in \kk$ so that
\[
f^ah^ke^{p-1-k} = \sum_{\substack{0 \le b \le k-1 \\ i,j < p-1 -b}} r_{i,b,j} f^ih^be^j.
\]
We consider $f^ah^ke^l$ for $l>p-1-k$, and have that
\[
f^ah^ke^l = \sum_{\substack{0 \le b \le k-1 \\ i,j < p-1 -b}} r_{i,b,j} f^i h^b e^{j+(l-p-1-k)}.
\]
Using the induction hypothesis $f^ih^be^{j+(l-p-1-k)} \in \sspan(S_0 \cup \dots \cup S_b)$,
and hence $f^ah^ke^l \in \sspan(S_0 \cup \dots \cup S_{k-1})$.

We can use the antiautomorphism $\sigma$ from Remark~\ref{R:equivalencyofeandf}
to show that $f^ah^ke^c \in \sspan(S_0 \cup \dots \cup S_{k-1})$ when $a \ge p-1-k$, and so we have
completed the proof of (a).

In fact we have proved that for any $k \in \Z_{\ge 0}$ with $k <p$ and any $a,b \in \Z_{\ge 0}$ that
we have $f^a h^k e^b \in \sspan(S_0 \cup \dots \cup S_{p-2})$.  As a particular case, we
have that $h^{p-1} \in  \sspan(S_0 \cup \dots \cup S_{p-2})$.  Now given  $f^a h^k e^b$ with $k \ge p$,
we can repeatedly substitute $h^{p-1}$ as an expression in $\sspan(S_0 \cup \dots \cup S_{p-2})$ and
obtain $f^a h^k e^b$ as a linear combination of terms $f^ih^be^j$ with $b \le p-1$.  From this we
can deduce that $f^a h^k e^b \in \sspan(S_0 \cup \dots \cup S_{p-2})$ using part (a) of the lemma.
Thus we have proved part (b) of the lemma.
\end{proof}

Using Lemma~\ref{L:spanning} we are now able to show that $S$ is a spanning set for $A$,
and hence prove Proposition~\ref{P:spanningset}.

\begin{proof}[Proof of Proposition~\ref{P:spanningset}]
We have that $\{f^ah^be^c \mid a,b,c \in \Z_{\ge 0} \}$ is a basis for $U(\s)$,
hence as $A$ is obtained from $U(\s)$ by taking the quotient by $\langle e^{p-1} , f^{p-1} \rangle$ we see that
\[
\{ f^ah^be^c \mid a,c < p-1,\ b \in \Z_{\ge 0} \}
\]
spans $A$. By Lemma~\ref{L:spanning}, every element in this set is
contained in the span of $S$.  Hence, $S$ is a spanning set for $A$.
\end{proof}

\subsection{Proof of Theorem~\ref{T:classification}(a)}

Let $G= \GL_n(\kk)$.
We recall that $\cN^{p-1}$ is defined in \eqref{e:Np-1}.  In Corollary~\ref{C:gl} it
is stated that $\cN^{p-1}$ has the $\sl_2$-property.  To prove this corollary we use the fact that
$A$ is semisimple.  The semisimplicity of $A$ is stated as part of the following corollary, which is
proved as explained after the statement of Proposition~\ref{P:spanningset}.

\begin{Corollary} \label{C:Asemisimple}
We have that $S$ is a basis of $A$, so the dimension of $A$ is equal to
$\sum_{i=1}^{p-1} i^2$. Further, we have that $A$ is
semisimple, and the simple modules of $A$ are $V(0)$, $V(1)$, \dots, $V(p-2)$.
\end{Corollary}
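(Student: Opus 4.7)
The plan is to squeeze $\dim A$ between a lower bound on $\dim(A/\rad A)$ and an upper bound on $\dim A$, both of which happen to equal $\sum_{i=1}^{p-1} i^2$. Forcing equality throughout yields simultaneously that $S$ is a basis, that $\rad A = 0$, and that $\{V(0), V(1), \dots, V(p-2)\}$ is a complete set of simple $A$-modules.

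More precisely, first I would invoke Proposition~\ref{P:spanningset} to obtain that $S = \bigcup_{k=0}^{p-2} S_k$ is a spanning set for $A$. Since $|S_k| = (p-1-k)^2$ and the $S_k$ are manifestly disjoint (the $S_k$'s consist of monomials $f^a h^k e^c$ in distinct powers of $h$), we have
\[
\dim A \le |S| = \sum_{k=0}^{p-2} (p-1-k)^2 = \sum_{i=1}^{p-1} i^2.
\]
Next I would apply Corollary~\ref{C:lowerbound}, which gives
\[
\sum_{i=1}^{p-1} i^2 \le \dim(A/\rad A) \le \dim A.
\]
Combining these two chains of inequalities forces every inequality to be an equality: in particular $\dim A = \sum_{i=1}^{p-1} i^2$, the spanning set $S$ must be linearly independent and hence is a basis, and $\dim(A/\rad A) = \dim A$, which forces $\rad A = 0$. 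Thus $A$ is semisimple.

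Finally, to identify the simple modules, I would observe that by Lemma~\ref{L:simplemodules} the modules $V(0), V(1), \dots, V(p-2)$ are pairwise non-isomorphic simple $A$-modules, so by Wedderburn's theorem the matrix algebras $\operatorname{End}_\kk(V(d))$ for $d=0,\dots,p-2$ contribute at least $\sum_{i=1}^{p-1} i^2$ to $\dim A$. Since this already exhausts $\dim A$, there can be no further isomorphism classes of simple $A$-modules, and the Wedderburn decomposition reads
\[
A \iso \bigoplus_{d=0}^{p-2} \operatorname{End}_\kk(V(d)).
\]
There is no real obstacle here, since the hard work has been done in Corollary~\ref{C:lowerbound} and Proposition~\ref{P:spanningset}; the corollary is simply the bookkeeping step that packages the matching upper and lower bounds into the semisimplicity statement and the classification of simples.
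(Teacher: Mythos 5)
Your proof matches the paper's argument exactly: both squeeze $\dim A$ between the upper bound $|S|=\sum_{i=1}^{p-1}i^2$ from Proposition~\ref{P:spanningset} and the lower bound for $\dim(A/\rad A)$ from Corollary~\ref{C:lowerbound}, then read off that $S$ is a basis, $\rad A=0$, and $V(0),\dots,V(p-2)$ exhaust the simples. Nothing to add or correct.
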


\begin{Remark}
We note that further results can be proved using the arguments for the proof
of Corollary~\ref{C:Asemisimple} (or deduced from its statement).  For any $m < p$,
it can be shown that $U(\s)/\langle e^m, f^m \rangle$ is a semisimple
algebra with simple modules $V(0)$, $V(1)$, \dots, $V(m-1)$; also this
statement can be proved for the case of $\s = \sl_2(\C)$ for any $m \in \Z_{>0}$.
These results are also covered in \cite[Theorem 2]{JacobsonTDSLA}.
\end{Remark}

We now explain how $A$-modules relate to $\sl_2$-triples in $\g$.
Any $A$-module $M$ can be considered as an
$\s$-module, and thus we obtain an $\sl_2$-triple $(e_M, h_M, f_M)$ in $\gl(M)$,
as explained in \S\ref{ss:sl2modules}.  Moreover, we have $e_M^{p-1} = 0 =
f_M^{p-1}$, as $M$ is an $A$-module and $e^{p-1} = 0 = f^{p-1}$ in $A$.
Suppose that $\dim M = n$ and choose an identification $M \iso \kk^n$
as a vector space, then we can view $(e_M, h_M, f_M)$ as an $\sl_2$-triple
in $\g$.  Further, given two $A$-modules $M$ and $N$, both of dimension $n$, we have that $M \iso N$
if and only if the $\sl_2$-triples $(e_M, h_M, f_M)$
and $(e_N, h_N, f_N)$ are conjugate by an element of $G$.

Hence, there is a one-to-one correspondence between the set of $n$-dimensional
$A$-modules up to isomorphism and the $\sl_2$-triples $(e,h,f)$ in $\g$ with
$e,f \in \cN^{p-1}$ up to conjugacy by elements of $G$.
Thus proving that $\cN^{p-1}$ satisfies the $\sl_2$-property
is equivalent to proving that for each partition $\lambda$ of $n$
such that $m_i(\lambda) = 0$ for all $i \ge p$, there is an $n$-dimensional
$A$-module $M_\lambda$ on which $e$ acts with Jordan type $\lambda$, and this module is
unique up to isomorphism.

By Corollary~\ref{C:Asemisimple}, each $A$-module is semisimple and
hence a direct sum of the simple modules $V(0), \dots, V(p-2)$.
So any $n$-dimensional $A$-module satisfies
$M \iso \bigoplus_{d=0}^{p-2} V(d)^{\oplus s_d}$ for some
$s_d \in \Z_{\ge 0}$ with $\sum_{d=0}^{p-2}(d+1)s_d = n$.
We have that $e$ acts on $M$ with Jordan type $\lambda_M$,
where $m_d(\lambda_M) = s_{d-1}$ for each $d$.

Thus we see the desired module is
$M_\lambda := \bigoplus_{d=0}^{p-2} V(d)^{\oplus m_{d+1}(\lambda)}$.
Hence, we have proved the following corollary.

\begin{Corollary}  \label{C:gl}
Let $G = \GL_n(\kk)$.  Then $\cN^{p-1}$ satisfies the $\sl_2$-property.
\end{Corollary}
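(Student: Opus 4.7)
The plan is to deduce Corollary~\ref{C:gl} from the semisimplicity of $A$ in Corollary~\ref{C:Asemisimple}, by translating the $\sl_2$-property into a claim about $A$-modules. The key tool is the bijection outlined immediately before the statement: each $n$-dimensional $A$-module $M$ yields, after choosing an identification $M \iso \kk^n$, an $\sl_2$-triple $(e_M, h_M, f_M)$ in $\g = \gl_n(\kk)$; two modules are isomorphic if and only if the corresponding triples are $G$-conjugate; and every $\sl_2$-triple $(e,h,f)$ with $e, f \in \cN^{p-1}$ arises this way, since the relations $e^{p-1} = f^{p-1} = 0$ of $A$ cut out exactly this condition. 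Thus the $\sl_2$-property for $\cV = \cN^{p-1}$ reduces to the statement that for each partition $\lambda$ of $n$ with every part at most $p-1$ there is exactly one isomorphism class of $n$-dimensional $A$-modules on which $e$ acts with Jordan type $\lambda$.

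Next I would apply Corollary~\ref{C:Asemisimple} to decompose any $n$-dimensional $A$-module uniquely as
\begin{equation*}
M \iso \bigoplus_{d=0}^{p-2} V(d)^{\oplus s_d}, \qquad \sum_{d=0}^{p-2}(d+1)s_d = n.
\end{equation*}
Using the description from \S\ref{ss:sl2modules} of $V(d)$ as the simple quotient of the baby Verma module $Z(d)$, the formulas for the action of $e$ on the basis $v_0, \dots, v_d$ show that $e$ acts on $V(d)$ as a single Jordan block of size $d+1$ (the coefficients $i(d-i+1)$ are nonzero in $\kk$ for $1 \le i \le d \le p-2$). Consequently the Jordan type $\lambda(e_M)$ satisfies $m_{d+1}(\lambda(e_M)) = s_d$ for every $d$, so the multiplicities $s_d$ and the Jordan type of $e_M$ determine each other.

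To finish, for each partition $\lambda$ of $n$ with all parts at most $p-1$ I would set
\begin{equation*}
M_\lambda := \bigoplus_{d=0}^{p-2} V(d)^{\oplus m_{d+1}(\lambda)},
\end{equation*}
which has dimension $\sum_{i=1}^{p-1} i\cdot m_i(\lambda) = n$ and on which $e$ acts with Jordan type $\lambda$, thereby realizing the required $G$-orbit of $\sl_2$-triples; conversely, any $n$-dimensional $A$-module with $e \sim \lambda$ is forced by the previous paragraph to be isomorphic to $M_\lambda$, giving uniqueness. Since the substantive work — producing enough simple modules and bounding $\dim A$ to force semisimplicity — has already been carried out in \S\S\ref{ss:lowerbound}--\ref{ss:spanningset} and packaged into Corollary~\ref{C:Asemisimple}, no serious obstacle remains here; the proof of the corollary itself is essentially bookkeeping matching Jordan-block sizes with multiplicities of simple summands.
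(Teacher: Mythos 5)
Your argument reproduces the paper's own proof: both pass through the correspondence between $n$-dimensional $A$-modules up to isomorphism and $G$-orbits of $\sl_2$-triples $(e,h,f)$ with $e,f\in\cN^{p-1}$, then invoke the semisimplicity from Corollary~\ref{C:Asemisimple} to decompose any such module as $\bigoplus V(d)^{\oplus s_d}$ and read off the Jordan type from the multiplicities. The only point where you are slightly more explicit than the paper is in spelling out that the coefficients $i(d-i+1)$ are nonzero for $1\le i\le d\le p-2$, ensuring $e$ acts on $V(d)$ as a single Jordan block of size $d+1$; this is a harmless elaboration.
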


We now explain that $\cN^{p-1}$ is maximal satisfying the $\sl_2$-property to complete the
proof of Theorem~\ref{T:classification}(a).
Suppose that $\cV$ is a $G$-stable closed subvariety of $\cN$ such that $\cV \not\subseteq \cN^{p-1}$.
Thus there must exist some $e' \in \cV$ with Jordan type $\lambda = ( \lambda_1, \lambda_2, \dots, \lambda_m)$
such that $\lambda_1 \geq p$. Hence, by Theorem
\ref{T:closureorder}, there exists $e \in \cV$ with Jordan type $(p,1^{n-p})$.
Thus, using Proposition \ref{P:SLp}, we deduce that $\cV$ does not satisfy the $\sl_2$-property.

\subsection{Deduction of Theorem~\ref{T:classification}(b)} \label{ss:sl}
In this short subsection we deal with the case $G = \SL_n(\kk)$ and explain
that Theorem~\ref{T:classification}(b) follows quickly from
Theorem~\ref{T:classification}(a).  We let $\bar G = \GL_n(\kk)$ and $\bar \g = \Lie(\bar G)$.

We note that the nilpotent cone $\cN$ in $\g$ is the same as
the nilpotent cone of $\bar \g$, and that two elements in $\cN$ are conjugate
by $\bar G$ if and only they are conjugate by $G$, because
$\bar G$ is generated by $G$ and $Z(\bar G)$.  Also we note
that any $\sl_2$-triple in $\bar \g$ must lie in the derived subalgebra
of $\bar \g$, which is equal to $\g$.  Thus for any $G$-stable
closed subvariety $\cV$ of $\cN$, we have that
the set of $G$-orbits in $\cV$ is equal to the set of $\bar G$-orbits in $\cV$
and the set of $G$-orbits of $\sl_2$-triples $(e,h,f)$ with $e,f \in \cV$ is
equal to the set of $\bar G$-orbits of $\sl_2$-triples $(e,h,f)$ with $e,f \in \cV$.
It is now clear that Theorem~\ref{T:classification}(b) follows from Theorem~\ref{T:classification}(a).

\section{The symplectic, orthogonal and special orthogonal groups} \label{S:resultsforclassicals}

In this section we deal with the cases where $G$ is one of $\Sp_n(\kk)$, $\O_n(\kk)$ or $\SO_n(\kk)$ and prove
parts (c), (d) and (e) of Theorem~\ref{T:classification}.

\subsection{Proof that \texorpdfstring{$\cN^{p-1}$}{N(p-1)} satisfies the \texorpdfstring{$\sl_2$}{sl\_2}-property for
\texorpdfstring{$\Sp_n(\kk)$}{Sp\_n(k)} and \texorpdfstring{$\O_n(\kk)$}{O\_n(k)},
and deduction of Theorem~\ref{T:classification}(c)}
Let $G$ be one of $\Sp_n(\kk)$ or $\O_n(\kk)$.  In Proposition~\ref{P:Np-1} we show that
$\cN^{p-1}$ satisfies the $\sl_2$-property; we recall
that $\cN^{p-1}$ is defined in \eqref{e:Np-1}.  To prove this we want to relate
$G$-conjugacy of $\sl_2$-triples in $\g$ with $\GL_n(\kk)$-conjugacy of $\sl_2$-triples in $\g$,
so that we can apply Theorem~\ref{T:classification}(a).  This link is
given in Lemma~\ref{L:sl2orbitsarethesame} and is based on
\cite[Theorem~1.4]{JantzenNO}, which  tells us that two elements of $\g$ are conjugate
by $G$ if and only if they are conjugate by $\GL_n(\kk)$.  With minor
modifications the proof of \cite[Theorem~1.4]{JantzenNO} goes through to prove the lemma below.

\begin{Lemma} \label{L:sl2orbitsarethesame}
Let $G$ be one of $\Sp_n(\kk)$ or $\O_n(\kk)$, and let $(e,h,f)$ and $(e',h',f')$ be
$\sl_2$-triples in $\g$. Then $(e,h,f)$ and $(e',h',f')$ are in the same $G$-orbit
if and only if they are in the same $\GL_n(\kk)$-orbit.
\end{Lemma}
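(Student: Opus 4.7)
\medskip
\noindent\textbf{Proposal.} The forward direction is immediate, since $G \sub \GL_n(\kk)$. For the reverse direction, suppose $g \in \GL_n(\kk)$ satisfies $\Ad(g)(e,h,f) = (e',h',f')$. My plan is to exhibit $c \in C_{\GL_n(\kk)}(e,h,f)$ such that $gc \in G$; then $\gamma := gc$ still conjugates $(e,h,f)$ to $(e',h',f')$ and lies in $G$, finishing the proof. Thus the task reduces to showing that $g$ lies in the double coset $G \cdot C_{\GL_n(\kk)}(e,h,f)$, which is precisely the statement of \cite[Theorem~1.4]{JantzenNO} with ``the element $x$'' replaced by ``the triple $(e,h,f)$''.

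Let $\phi$ denote the non-degenerate bilinear form on $V := \kk^n$ defining $G$ (alternating when $G = \Sp_n(\kk)$, symmetric when $G = \O_n(\kk)$), so that $G = \{ h \in \GL_n(\kk) \mid \phi(hv,hw) = \phi(v,w) \text{ for all } v,w\}$ and $\g$ is the space of $\phi$-skew endomorphisms of $V$. Define a second bilinear form $\phi^g$ on $V$ by $\phi^g(v,w) := \phi(gv,gw)$. Since $(e',h',f') = \Ad(g)(e,h,f) \in \g$ consists of $\phi$-skew operators, a short direct computation (substituting $v \mapsto g^{-1}v$, $w \mapsto g^{-1}w$) shows that $e,h,f$ are simultaneously $\phi^g$-skew. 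Moreover $\phi^g$ is non-degenerate and of the same parity (symmetric or alternating) as $\phi$. Observe next that finding the desired $c$ is equivalent to arranging $\phi^{gc} = \phi$, which unravels to $(\phi^g)^c = \phi$, where $(\phi^g)^c(v,w) := \phi^g(cv,cw)$.

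The heart of the proof is therefore the following claim: \emph{the group $C_{\GL_n(\kk)}(e,h,f)$ acts transitively on the set of non-degenerate bilinear forms on $V$ of the given parity for which $e$, $h$ and $f$ are all skew.} Granting this, we apply it to the two forms $\phi$ and $\phi^g$ to produce the required $c$. The conceptual content is that ``$e,h,f$ all skew'' is the same as ``$\s$-invariant'' once we regard $V$ as an $\s$-module via the triple; similarly $C_{\GL_n(\kk)}(e,h,f) = \Aut_\s(V)$. So the claim says that any two non-degenerate symmetric (resp.\ alternating) $\s$-invariant forms on $V$ are related by an $\s$-module automorphism.

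I expect the main obstacle to be proving the claim in the characteristic $p$ setting, where indecomposable $\s$-modules are more varied than in characteristic zero (they include baby Verma modules, projectives, and other non-simple pieces). The approach I would take is essentially Jantzen's: decompose $V$ into indecomposable $\s$-summands, show that a non-degenerate invariant form on $V$ must pair each indecomposable either with itself (a self-dual summand) or with its dual (a hyperbolic pair), and then invoke a Witt-type cancellation argument one summand at a time, using that on each self-dual or hyperbolic block the automorphism group of the block acts transitively on invariant forms of a given parity. The only modification from \cite[Theorem~1.4]{JantzenNO} is that one replaces the ``centralizer of a single nilpotent endomorphism'' bookkeeping with ``centralizer of an $\s$-action''; since a form is skew for the triple iff it is $\s$-invariant, and since the $\s$-module decomposition of $V$ is known explicitly from the Jordan-type data of $e$ together with the baby-Verma analysis recalled in \S\ref{ss:sl2modules}, the matching of summands under duality is straightforward, and Jantzen's argument carries over with only notational changes.
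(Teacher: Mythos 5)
Your proposal is correct and matches the paper's argument, which is given only as a one-line appeal to ``minor modifications'' of \cite[Theorem~1.4]{JantzenNO}. You have spelled out precisely those modifications: reduce to showing that $C_{\GL_n(\kk)}(e,h,f) = \Aut_\s(V)$ acts transitively on non-degenerate $\s$-invariant forms of the given parity, and then run Jantzen's Witt-cancellation argument on the $\s$-module $V$ rather than on the $\kk[x]$-module structure attached to a single nilpotent element.
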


We move on to prove the main result in this subsection.

\begin{Proposition} \label{P:Np-1}
Let $G$ be one of $\Sp_n(\kk)$ or $\O_n(\kk)$.  Then $\cN^{p-1}$ satisfies the
$\sl_2$-property.
\end{Proposition}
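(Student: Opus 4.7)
The strategy is to reduce to the case $\bar G = \GL_n(\kk)$ settled by Theorem~\ref{T:classification}(a). There are two main inputs: Lemma~\ref{L:sl2orbitsarethesame}, which lets us convert $\bar G$-conjugacy of $\sl_2$-triples inside $\g$ to $G$-conjugacy, and the theory of standard $\sl_2$-triples recalled in \S\ref{ss:standard}, which provides a partner $f \in \cN^{p-1}$ for each $e \in \cN^{p-1}\cap \g$. With these in hand, the proof will split cleanly into the injectivity and surjectivity of the map in \eqref{e:Kostantmapthm} restricted to $\cV = \cN^{p-1}$.

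For the injectivity half, I would take two $\sl_2$-triples $(e,h,f)$ and $(e,h',f')$ in $\g$ with $e,f,f' \in \cN^{p-1}$. Viewing both as $\sl_2$-triples in $\bar\g = \gl_n(\kk)$, their shared $e$ and the hypothesis $f,f' \in \cN^{p-1}$ mean that Theorem~\ref{T:classification}(a) applies, and the bijectivity it asserts places both $\sl_2$-triples in the same $\bar G$-orbit. Lemma~\ref{L:sl2orbitsarethesame} then promotes this $\bar G$-conjugacy to $G$-conjugacy, as required.

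For the surjectivity half, fix any $e \in \cN^{p-1}\cap\g$; since $e^{p-1}=0$ we have in particular $e^p=0$. Choose an optimal cocharacter $\tau:\kk^\times\to G$ for $e$ and form the associated standard $\sl_2$-triple $(e,h_\tau,f)$ in $\g$ as recalled in \S\ref{ss:standard}. The property recalled there from \cite[\S2.4]{PremetStewart} states that when $e^p=0$ the element $f$ is $G$-conjugate to $e$; in particular $f$ inherits the Jordan type of $e$, so $f^{p-1}=0$ and $f\in\cN^{p-1}$. Thus the $G$-orbit of $e$ lies in the image of \eqref{e:Kostantmapthm}, completing the argument.

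No single step here is genuinely difficult; the only place where one needs to be careful is the invocation of the standard-$\sl_2$-triple feature that $e^p=0$ forces $e$ and $f$ to be $G$-conjugate, since this is exactly the mechanism that converts the hypothesis $e^{p-1}=0$ into the conclusion $f^{p-1}=0$ needed for existence. Once that observation is in place the rest of the proof is a short bookkeeping exercise combining Theorem~\ref{T:classification}(a) and Lemma~\ref{L:sl2orbitsarethesame}.
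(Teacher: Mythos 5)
Your proposal is correct and follows essentially the same route as the paper: injectivity via Corollary~\ref{C:gl} (equivalently Theorem~\ref{T:classification}(a)) and Lemma~\ref{L:sl2orbitsarethesame}, and surjectivity via the standard $\sl_2$-triple construction of \S\ref{ss:standard} (the paper also offers Pommerening's result as an alternative for surjectivity, but your spelled-out standard-triple argument is exactly the one the paper has in mind).
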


\begin{proof}
By the result of Pommerening in \cite[\S2.1]{Pommerening}, or the theory of
standard $\sl_2$-triples recapped in \S\ref{ss:standard}, we have that the map in
\eqref{e:Kostantmapthm} for $\cV = \cN^{p-1}$ is surjective.

Let $(e,h,f)$, $(e,h',f')$ be $\sl_2$-triples in $\g$ with $e,f,f' \in \cN^{p-1}$.
By Corollary~\ref{C:gl}, these $\sl_2$-triples are conjugate by $\GL_n(\kk)$,
and thus by Lemma~\ref{L:sl2orbitsarethesame} are conjugate by $G$.  This implies
that the map in \eqref{e:Kostantmapthm} for $\cV = \cN^{p-1}$ is injective.
\end{proof}

Now let $G = \Sp_n(\kk)$.  We complete the proof Theorem~\ref{T:classification}(c) by
explaining that $\cN^{p-1}$ is the maximal $G$-stable
closed subvariety of $\cN$ satisfying the $\sl_2$-property.
Let $\cV$ be a $G$-stable closed subvariety of $\cN$ such that
$\cV \nsubseteq \cN^{p-1}$.  Then there is an element in $\cV$ which has Jordan type
$\lambda = (\lambda_1,\lambda_2,\dots,\lambda_m)$, where either $\lambda_1 > p$, or
$\lambda_1 = \lambda_2 = p$.  Using Theorem~\ref{T:closureorder}, we deduce that there is an element in $\cV$
with Jordan type $(p+1,1,\dots,1)$ or $(p,p,1, \dots,1)$.  For the first possibility
we can apply Proposition~\ref{P:cVsubcNp} to deduce that $\cV$ does not satisfy
the $\sl_2$-property, whilst in the second case we can apply Proposition~\ref{P:SLp}
to deduce that $\cV$ does not satisfy the $\sl_2$-property.

\subsection{Proof of Theorem~\ref{T:classification}(d)} \label{ss:orthogonal}

Let $G = \O_n(\kk)$ and recall that ${}^1\cN^p$ is defined in \eqref{e:1Np}.
In Proposition~\ref{P:orthogonal}, we prove that ${}^1\cN^p$ satisfies
the $\sl_2$-property.  This proof requires some analysis of underlying $\s$-modules,
where we recall that $\s = \sl_2(\kk)$,
and we note that the ideas in the proof have some similarities with those in the proof
of \cite[Lemma~6.2]{StewartThomas}.

In the proof of Proposition~\ref{P:orthogonal} we apply some well-known general
results on module extensions, which are stated in Lemma~\ref{L:splitting} for convenience of reference.
We only state this lemma for $\s$-modules, though it is of course applicable more generally.

Before the statement of Lemma~\ref{L:splitting} we introduce some notation.  We use the notation $M \iso A|B$
for $\s$-modules $M$, $A$ and $B$, to mean there is short exact sequence
$0 \to B \to M \to A \to 0$.  When using this notation, we identify $B$ with a fixed
submodule of $M$ and $A$ as the corresponding quotient.  We also use the notation
to cover three (or more) modules, so consider $\s$-modules of the form $A|B|C$,
where $A$, $B$ and $C$ are $\s$-modules, and note there is no need to include brackets
in the notation $A|B|C$.
In part (a) of the statement of Lemma~\ref{L:splitting} we should really define
the module $A|C$ occurring there.  This can be defined as the quotient of
$A|B|C$ by the submodule $B$ given by the splitting $B | C \iso B \oplus C$;
or equivalently as the submodule of $A|B|C$ corresponding to the submodule
$A$ of $A|B$ given by the splitting $A | B \iso A \oplus B$.
The modules $A|B$, $A|C$ and $B|C$ in parts (b) and (c) are defined similarly.

\begin{Lemma} \label{L:splitting}
Let $M$, $A$, $B$, and $C$ be $\s$-modules.
\begin{enumerate}
\item Suppose that $M \iso A|B|C$, and that $A|B \iso A \oplus B$ and $B|C \iso B \oplus C$.
Then $M \iso (A|C) \oplus B$.
\item Suppose that $M \iso (A \oplus B)|C$ and that $B|C \iso B \oplus C$.
Then $M \iso (A|C) \oplus B$.
\item Suppose that $M \iso A|(B \oplus C)$ and that
$A|B \iso A \oplus B$.  Then $M \iso (A|C) \oplus B$.
\end{enumerate}
\end{Lemma}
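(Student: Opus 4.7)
The plan is to treat all three parts by the same mechanism: each asserts that a given extension splits off a direct summand $B$, leaving behind an extension of $A$ by $C$. The cleanest formulation uses the standard additive decomposition of $\Ext^1$ with respect to direct sums, but one can equally well argue by hand producing explicit complementary submodules; I will sketch both viewpoints.

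For part (a), view $M \iso A|B|C$ as an extension $0 \to B|C \to M \to A \to 0$, so $M$ determines a class in $\Ext^1_\s(A,B|C)$. Under the hypothesis $B|C \iso B \oplus C$, this group decomposes as $\Ext^1_\s(A,B) \oplus \Ext^1_\s(A,C)$. The component in $\Ext^1_\s(A,B)$ records the pushout $M/C$, which is isomorphic to $A|B$ and therefore split by hypothesis. Hence $[M]$ lies purely in the $\Ext^1_\s(A,C)$-summand, and this forces $M \iso B \oplus (A|C)$. Parts (b) and (c) are variants: in (b), view $M$ as giving a class in $\Ext^1_\s(A \oplus B, C) = \Ext^1_\s(A,C) \oplus \Ext^1_\s(B,C)$ and use $B|C \iso B \oplus C$ to kill the second component; in (c), view $M$ as a class in $\Ext^1_\s(A, B \oplus C) = \Ext^1_\s(A,B) \oplus \Ext^1_\s(A,C)$ and use $A|B \iso A \oplus B$ to kill the first.

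If one prefers to avoid Ext, the same arguments are visible as short diagram chases. For (a), fix submodules $M_2 \sub M_1 \sub M$ with $M_2 \iso C$, $M_1/M_2 \iso B$, $M/M_1 \iso A$. The splitting of $A|B$ gives a submodule $\widetilde M_1 \sub M$ containing $M_2$ with $\widetilde M_1/M_2 \iso A$ and $\widetilde M_1 \cap M_1 = M_2$, while the splitting of $B|C$ gives a submodule $B' \sub M_1$ with $B' \iso B$ and $B' \cap M_2 = 0$. Then $\widetilde M_1 \cap B' \sub \widetilde M_1 \cap M_1 \cap B' \sub M_2 \cap B' = 0$, and a dimension count gives $M = \widetilde M_1 \oplus B'$ with $\widetilde M_1 \iso A|C$ and $B' \iso B$. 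Parts (b) and (c) admit entirely analogous explicit constructions, lifting the relevant complementary summands through the given short exact sequence.

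I do not expect any real obstacle: the statement is a purely formal fact about module extensions that holds over any ring, and makes no use of the representation theory of $\s$ beyond additivity of $\Ext^1$. The only care needed is bookkeeping of which piece in the filtration plays the role of the submodule and which plays the role of the quotient in each of the three parts.
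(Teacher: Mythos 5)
Your argument is correct and, in its explicit-submodule form, is essentially the proof the paper sketches: the paper uses the splitting of $B|C$ to manufacture a short exact sequence $0 \to B \to M \to A|C \to 0$ and then the splitting of $A|B$ to build a retraction $M \to B$, which encodes exactly the same data as your complementary submodules $\widetilde M_1$ and $B'$ (a retraction onto $B$ and a complement to $B$ are equivalent). Your $\Ext^1$-additivity framing is a more homological repackaging of the same computation and is equally valid; the identification of the two components with the pushouts $M/C$ and $M/B'$ (resp.\ the pullbacks in (b) and (c)) is the one point worth making precise, and you state it correctly. The only organizational difference is that the paper proves (a) directly and deduces (b) and (c) by rereading the given filtration of $M$ as a three-step one, whereas you treat each part on its own terms.
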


We note that (a) can be proved by using splitting maps $B \to B|C$
and $B|C \to C$ for the short exact sequence $C \to B|C \to B$ to
construct a short exact sequence $B \to A|B|C \to A|C$.  Then a splitting
map $A|B \to B$ for the short exact sequence $B \to A|B \to A$ can be used
to construct a splitting map $A|B|C \to B$ for the short exact sequence $B \to A|B|C \to A|C$.
We have that (b) and (c) are immediate consequences of (a).

We also require an elementary well-known lemma about the action of
nilpotent elements in $\s$-modules, which is stated in Lemma~\ref{L:part}.
We only state this lemma for $\s$-modules, though it is of course applicable more generally.
In the statement we use the notation given in \S\ref{ss:sl2modules} and
\S\ref{ss:nilpotentorbits}.

\begin{Lemma} \label{L:part}
Let $M$, $A$ and $B$ be $\s$-modules and let $x \in \s$.
Suppose that $M \iso A|B$
and that $x_A$ and $x_B$ are nilpotent.
Then $x_M$ is nilpotent and $\lambda(x_A) | \lambda(x_B) \preceq \lambda(x_M)$.
\end{Lemma}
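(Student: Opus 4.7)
The plan is to reduce everything to a block matrix computation by choosing a vector space splitting $M = B \oplus A$ (not a module splitting, which in general is unavailable). Under such a splitting, the hypothesis that $B$ is a submodule and $A$ is the corresponding quotient means that $x_M$ has the block upper triangular form
\[
x_M = \begin{pmatrix} x_B & \phi \\ 0 & x_A \end{pmatrix}
\]
for some linear map $\phi \colon A \to B$, and hence $x_M^k$ has the analogous block upper triangular form with diagonal blocks $x_B^k$ and $x_A^k$.

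Nilpotency of $x_M$ follows at once: if $x_A^k = 0$ then $x_M^k$ sends $M$ into $B$, and $x_M$ acts on $B$ as $x_B$, so if $x_B^l = 0$ then $x_M^{k+l} = 0$.

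For the dominance claim I would use the standard rank characterization of the dominance order: for a nilpotent endomorphism $y$ of a $d$-dimensional space with Jordan type $\mu$, one has $\mathrm{rank}(y^k) = d - \sum_{j=1}^{k} \mu'_j$, where $\mu'$ denotes the conjugate partition; and the inequality $\nu \preceq \mu$ is equivalent to $\mu' \preceq \nu'$, i.e.\ to $\sum_{j=1}^{k} \mu'_j \le \sum_{j=1}^{k} \nu'_j$ for all $k$. The key inequality to establish is
\[
\mathrm{rank}(x_M^k) \ge \mathrm{rank}(x_A^k) + \mathrm{rank}(x_B^k) \qquad \text{for every } k \ge 1.
\]
This follows from the block form: choosing $v_1,\dots,v_r \in B$ with $x_B^k v_i$ linearly independent and $w_1,\dots,w_s \in A$ with $x_A^k w_j$ linearly independent, the images $x_M^k \bigl( \begin{smallmatrix} v_i \\ 0 \end{smallmatrix} \bigr) = \bigl( \begin{smallmatrix} x_B^k v_i \\ 0 \end{smallmatrix} \bigr)$ together with $x_M^k \bigl( \begin{smallmatrix} 0 \\ w_j \end{smallmatrix} \bigr) = \bigl( \begin{smallmatrix} \ast \\ x_A^k w_j \end{smallmatrix} \bigr)$ are visibly linearly independent (the bottom components force all $w_j$ coefficients to vanish, then the top components force all $v_i$ coefficients to vanish).

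To finish, I would translate this rank inequality using the formulas above together with the observation that $\nu := \lambda(x_A) \,|\, \lambda(x_B)$ satisfies $\nu'_j = \lambda(x_A)'_j + \lambda(x_B)'_j$ (since $m_i(\nu) = m_i(\lambda(x_A)) + m_i(\lambda(x_B))$, and the conjugate counts parts of size at least $j$). Combined with $\dim M = \dim A + \dim B$, this gives $\sum_{j=1}^{k} \lambda(x_M)'_j \le \sum_{j=1}^{k} \nu'_j$ for every $k$, which is precisely $\nu \preceq \lambda(x_M)$. There is no real obstacle here; the only care needed is in bookkeeping the passage between Jordan type, ranks of powers, and the conjugate partition, which is standard.
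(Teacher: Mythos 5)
Your proof is correct, and it takes a genuinely different route from the paper's. The paper's sketch also begins by identifying $M$ with $A \oplus B$ as a vector space and putting $x_M$ in block upper triangular form, but it then finishes by a degeneration argument: it conjugates $x_M$ by the cocharacter $\tau : \kk^\times \to \GL(M)$ acting by $1$ on $A$ and by $t$ on $B$, so that as $t \to 0$ the off-diagonal block is killed and $x_A + x_B$ is exhibited in the closure of the $\GL(M)$-orbit of $x_M$; since $\lambda(x_A + x_B) = \lambda(x_A)\,|\,\lambda(x_B)$, the conclusion then follows from Theorem~\ref{T:closureorder} (Spaltenstein's description of the closure order on nilpotent orbits via the dominance order). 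Your argument replaces both the degeneration and the appeal to Theorem~\ref{T:closureorder} with the elementary rank inequality $\operatorname{rank}(x_M^k) \ge \operatorname{rank}(x_A^k) + \operatorname{rank}(x_B^k)$ together with the standard translation between ranks of powers, conjugate partitions, and the dominance order. Both proofs are valid: the paper's is shorter because it reuses Theorem~\ref{T:closureorder}, which is already available, and it conceptually explains why the dominance order appears, whereas yours is more elementary and self-contained, at the cost of a little combinatorial bookkeeping (which you carry out correctly, including the formula $\operatorname{rank}(y^k) = d - \sum_{j=1}^{k} \mu'_j$ and the equivalence $\nu \preceq \mu \Leftrightarrow \mu' \preceq \nu'$).
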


We give an outline of how this lemma can be proved.  First we identify $M$ and $A \oplus B$ as vector spaces.  We then
note that $x_A + x_B$ is in the closure of the $\GL(M)$-orbit of $x_M$, we see this by observing that $x_A + x_B$
lies in the closure of $\{(\Ad \tau(t)) x_M \mid t \in \kk^\times\}$, where
$\tau : \kk^\times \to \GL(M)$ is the cocharacter such that $\tau(t)a = a$ for all $a \in A$
and $\tau(t)b = tb$ for all $b \in B$. The proof  concludes by noting that $\lambda(x_A+x_B)= \lambda(x_A) | \lambda(x_B)$
and then applying Theorem~\ref{T:closureorder}.

We are now ready to state and prove our main result in this subsection.

\begin{Proposition} \label{P:orthogonal}
Let $G = \O_n(\kk)$.  Then ${}^1\cN^p$ satisfies the $\sl_2$-property.
\end{Proposition}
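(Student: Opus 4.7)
The plan is to establish the bijection in two steps, with the bulk of the work going into injectivity.

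Surjectivity is immediate from the theory of standard $\sl_2$-triples recapped in \S\ref{ss:standard}: given $e \in {}^1\cN^p$, we have $e^p = 0$, so the Premet--Stewart result produces a standard triple $(e, h_\tau, f)$ in which $e$ and $f$ are $G$-conjugate; in particular $f \in {}^1\cN^p$.

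For injectivity, let $(e, h, f)$ and $(e, h', f')$ be $\sl_2$-triples in $\g = \so_n(\kk)$ with $e, f, f' \in {}^1\cN^p$.  By Lemma~\ref{L:sl2orbitsarethesame} it suffices to establish $\GL_n(\kk)$-conjugacy, which is equivalent to the two resulting $\s$-module structures $V_1, V_2$ on $V = \kk^n$ being isomorphic as $\s$-modules.  Since both $\sl_2$-triples lie in $\so_n(\kk)$, each $V_i$ is self-dual, so the injectivity follows from the classification claim that any self-dual $\s$-module $V$ on which $e_V$ and $f_V$ both have Jordan types in ${}^1\cN^p$ is isomorphic to $\bigoplus_i V(\lambda_i - 1)$, where $\lambda = \lambda(e_V)$.

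To prove this claim I would decompose $V$ into indecomposable $\s$-summands.  The hypothesis $e^p = f^p = 0$ forces each summand to be a $U_{\star,0}(\s)$-module, so its composition factors lie in $\{V(d) : d \le p - 1\} \cup \{Z(d) : d \notin \F_p\}$.  A simple $Z(d)$ with $d \notin \F_p$ carries a single size-$p$ Jordan block for both $e$ and $f$; since $Z(d)^* \iso Z(-d-2) \not\iso Z(d)$, self-duality forces both $Z(d)$ and $Z(-d-2)$ to appear as summands, producing two size-$p$ blocks for each of $e$ and $f$, contradicting $\lambda_2 < p$.  Hence every indecomposable summand has all composition factors in $\{V(d) : d \le p - 1\}$.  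By the $\Ext$-vanishing in \eqref{e:extension}, nontrivial extensions of these simples occur only between $V(c)$ and $V(p-c-2)$ for $c \in \{0, \dots, p-2\}$.  The length-two extensions $Z(c)$ and $Z(c)^* \iso Z(p-c-2) \not\iso Z(c)$ each carry a single size-$p$ Jordan block for $f$, and self-duality pairs them up, producing two size-$p$ blocks for $f$ and again giving a contradiction.  For a longer iterated extension of $V(c)$ and $V(p-c-2)$, a direct analysis on a weight-adapted basis, using the commutator relation $[e,f] = h$ together with Lemma~\ref{L:splitting} (to peel off any $V(c)$ or $V(p-c-2)$ direct summand) and Lemma~\ref{L:part} (to control the Jordan type from below), forces either a direct-sum decomposition into already-handled pieces or the appearance of at least two Jordan blocks of size $p$ in $\lambda(e)$, once more contradicting $\lambda_2 < p$.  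With every non-simple indecomposable summand excluded, $V$ is a direct sum of simples $V(d)$ for $d \le p - 1$; the constraint $\lambda_2 < p$ limits the multiplicity of the Steinberg $V(p-1)$ to at most one, and the remaining multiplicities are dictated by $\lambda$, yielding $V \iso \bigoplus_i V(\lambda_i - 1)$.

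The principal obstacle will be the systematic treatment of the longer indecomposable summands, where one must translate the constraints $e^p = f^p = 0$ and $[e,f] = h$ into explicit equations on the extension data (on a weight basis adapted to the composition series of each indecomposable), and then show by repeated application of Lemmas~\ref{L:splitting} and~\ref{L:part}, uniformly in $c \in \{0, \dots, p-2\}$, that either the candidate summand decomposes or two size-$p$ Jordan blocks of $e$ or of $f$ are inevitably produced.
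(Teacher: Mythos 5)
Your outline gets the easy parts right: surjectivity from standard $\sl_2$-triples, the reduction via Lemma~\ref{L:sl2orbitsarethesame} to $\GL_n(\kk)$-conjugacy, and the framing of the problem as showing that the $\s$-module $V = \kk^n$ is determined up to isomorphism by $\lambda(e_V)$.  The targeted conclusion $V \iso \bigoplus_i V(\lambda_i - 1)$ is the correct one.  But the central claim on which everything rests --- that any \emph{abstractly self-dual} $\s$-module $V$ with $\lambda(e_V), \lambda(f_V)$ of ${}^1\cN^p$-type is semisimple --- is asserted and not proved, and this is exactly where the difficulty lives.  Your proposal handles the two easy families (simples $Z(d)$ with $d \notin \F_p$ and length-two extensions) by a duality-pairing argument, but relegates the whole category of longer indecomposables to ``a direct analysis on a weight-adapted basis.''  That is not a filler step: $U_{\star,0}(\sl_2(\kk))$ is of tame representation type and has infinite families of self-dual indecomposables (such as uniserial modules with composition series $V(c)\,|\,V(p-2-c)\,|\,V(c)$ and their band-module relatives), and one has to explain, for each such candidate, why the $\O_n(\kk)$-hypothesis forbids it.  As written, nothing in your argument uses the orthogonal form beyond abstract self-duality, so you would be trying to prove a strictly stronger statement than what is needed, with no leverage from $M$ being isotropic.

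The paper avoids this entirely by using the form, not just self-duality.  It picks a \emph{maximal isotropic} $\s$-submodule $M \le V$ and works with the filtration $0 \le M \le M^\perp \le V$ with subquotients $M$, $N = M^\perp/M$, $M^*$.  Maximal isotropy forces $N$ to be semisimple with pairwise non-isomorphic simple summands (any isotropic simple inside $N$ would lift to enlarge $M$, and two isomorphic orthogonal summands can be re-polarized into an isotropic pair, again contradicting maximality --- this re-polarization step uses Lemma~\ref{L:sl2orbitsarethesame} once more).  Constraints from $e,f \in {}^1\cN^p$ via Lemma~\ref{L:part} then force $e_M^{p-1} = f_M^{p-1} = 0$, so $M$ is semisimple by Corollary~\ref{C:Asemisimple}, and a careful splitting argument with Lemma~\ref{L:splitting} (combined with a concrete Jordan-block matrix computation on a subquotient $M_j^*\,|\,N_1\,|\,M_j$) shows $M^\perp \iso M \oplus N$ and then $V \iso M^* \oplus N \oplus M$.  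This does precisely the work your ``direct analysis'' would have to do, but the isotropic filtration localizes each problematic three-step extension and gives the contradiction (a Jordan block of size $> p$ or two blocks of size $\ge p$) in a controlled way.  To repair your proposal you would need either to introduce the isotropic filtration as the paper does, or to carry out in full the classification of self-dual indecomposable $U_{\star,0}(\sl_2)$-modules satisfying the Jordan-type restriction, which is considerably more work.
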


\begin{proof}
By the result of Pommerening in \cite[\S2.1]{Pommerening}, or the theory of
standard $\sl_2$-triples recapped in \S\ref{ss:standard}, the map in
\eqref{e:Kostantmapthm} for $\cV = {}^1\cN^p$ is surjective.  The rest of the
proof is devoted to proving that this map is in fact injective.

Let $(e,h,f)$ be an $\sl_2$-triple in $\g = \so_n(\kk)$ with $e,f \in {}^1\cN^p$.
Let $V \iso \kk^n$ be the natural module for $G = \O_n(\kk)$, and consider $V$ as a
an $\s$-module by restriction to the subalgebra of $\g$ spanned by $\{e,h,f\}$.
Write $(\cdot\,,\cdot)$ for the
$G$-invariant non-degenerate symmetric bilinear form on $V$.

The idea of the rest of the proof is to determine the structure of the $\s$-module $V$,
and observe that it is determined uniquely up to isomorphism by the Jordan type
of $e$.  Then at the end of the proof we use this and Lemma~\ref{L:sl2orbitsarethesame}
to deduce that the map in \eqref{e:Kostantmapthm} is injective.

Let $M \le V$ be a maximal isotropic $\s$-submodule of $V$,
and consider $M^\perp := \{v \in V \mid (v,m) = 0 \text{ for all } m \in M\} \le V$,
which is an $\s$-submodule of $V$.  As $M$ is isotropic we have the sequence of submodules
\begin{equation} \label{e:submodseries}
0 \le M \le M^\perp \le V.
\end{equation}
We have an $\s$-module homomorphism $\phi : V \to M^*$ defined by
$\phi(v)(m) = (m,v)$, where $M^*$ denotes the dual module of $M$.
This induces an isomorphism $V/M^\perp \iso M^*$, and so
by an abuse of notation we write $M^*$ for $V/M^\perp$.
Also we write $N$ for the $\s$-module $M^\perp/M$, and note that $(\cdot\,,\cdot)$
induces an $\s$-invariant nondegenerate symmetric bilinear form on $N$, which we
also denote by $(\cdot\,,\cdot)$.
Thus the quotients in the sequence in \eqref{e:submodseries} are $M$, $N$ and $M^*$,
or in other words $V \iso M^*|N|M$.

We first consider the $\s$-module $M$.
Suppose that $e_M^{p-1} \ne 0$.  Then $\lambda(e_M)$ contains a part of
size $p$ or greater.  We note that $\lambda(e_M) = \lambda(e_{M^*})$, so that
$\lambda(e_{M^*})$ also contains a part of size $p$ or greater.  Using Lemma~\ref{L:part},
we deduce that $\lambda(e_V)$ must have first and second parts greater than or
equal to $p$, but this is not possible as $e = e_V \in {}^1\cN^p$.  Thus we have
that $e_M^{p-1} = 0$. Similarly, we have $f_M^{p-1} = 0$.

It now follows that from Corollary~\ref{C:Asemisimple} that
we have a direct sum decomposition of the $\s$-module
\[
M = M_1 \oplus \dots \oplus M_r
\]
where each $M_i$ is simple, and $M_i \iso V(d_i)$ for some
$d_i \in \{0,1,\dots,p-2\}$.  We have a corresponding direct sum
decomposition
\[
M^* = M_1^* \oplus \dots \oplus M_r^*
\]
of $M^*$, where $M_i^* \iso V(d_i)$ and is dual to $M_i$ via $(\cdot\,,\cdot)$ for each $i$.

Next we consider the $\s$-module $N$, which we recall has
a nondegenerate symmetric invariant bilinear form.  Let $A$ be a simple submodule of $N$,
and consider $A^\perp \le N$, which is also a submodule of $N$.  Thus
$A \cap A^\perp$ is a submodule of $N$, and as $A$ is simple we have
$A \cap A^\perp$ is equal to $0$ or to $A$.  Suppose that
$A \cap A^\perp = A$, so that $A$ is an isotropic subspace of $N$.
Let $\bar A$ be the submodule of $M^\perp$ corresponding to $A \le M^\perp/M$.
Then $\bar A$ is isotropic and this contradicts that $M$ is a maximal isotropic.
Therefore, $A \cap A^\perp = 0$, so that $A$ is non-degenerate, and thus $N = A \oplus A^\perp$.

Hence, $N$ is a semisimple $\s$-module and in fact we have an orthogonal
direct sum decomposition
\begin{equation} \label{e:Ndecomp}
N = N_1 \oplus \dots \oplus N_s,
\end{equation}
where each $N_i$ is a simple $\s$-module and is a non-degenerate subspace for $(\cdot\,,\cdot)$.

Since $e,f \in {}^1\cN^p$, using Lemma~\ref{L:part}, we have that $e_N^p = 0 = f_N^p$, and that $\lambda(e_N)$ and
$\lambda(f_N)$ have at most one part of size $p$.
It follows that for each $i$ we have $N_i \iso V(c_i)$ for some $c_i \in \{0,1,\dots,p-2\}$
with the possible exception of one $j$ for which $N_j = V(c_j)$ where $c_j \in \kk \setminus \{0,1,\dots,p-2\}$.

We note that for $i$ such that $N_i \iso V(c_i)$ for some $c_i \in \{0,1,\dots,p-2\}$, we must have
that $c_i$ is even, because $e_{N_i} \in \so(N_i)$ and $\lambda(e_{N_i}) = (c_i+1)$,
so $c_i+1$ must be odd as explained in \S\ref{ss:nilpotentorbits}.

If there is a $j$ for which $N_j = V(c_j)$ where $c_j \in \kk \setminus \{0,1,\dots,p-2\}$, then
we can show that we must have $c_j = p-1$.  To see this we consider $h_{N_j} \in \so(N_j)$,
which is a semisimple element of $\so(N_j)$ with eigenvalues $c_j, c_j-2,\dots,c_j-2p+2$.
The eigenvalues of a semisimple element of $\so(N_j)$ must include 0 (and also the multiplicity of
an eigenvalue $a$ must be equal to the multiplicity of the eigenvalue $-a$).  It follows that
we must have $c_j = p-1$.

Next we show that $c_i \ne c_j$ for $i \ne j$.  Suppose that we did have $N_i \iso N_j$ for some
$i \ne j$.  We denote $N_{i,j} = N_i \oplus N_j$ and consider
the $\s$-module $N_{i,j}'= N_i' \oplus N_j'$, where $N_i' = N_i$ and $N_j' = N_j$
as $\s$-modules, but we give $N_i' \oplus N_j'$ a non-degenerate $\s$-invariant symmetric
bilinear form so that $N_i$ and $N_j$ are isotropic spaces that are dual to each other.
We fix an isomorphism $N_{i,j}' \iso N_{i,j}$ as vector spaces with
non-degenerate $\s$-invariant symmetric bilinear forms.
This can be used to view
$x_{N_{i,j}'}$ as an element of $\so(N_{i,j})$ for any $x \in \s$.
By definition we have that $N_i \oplus N_j \iso N_i' \oplus N_j'$ as $\s$-modules, which
implies that the $\sl_2$-triples $(e_{N_{i,j}},h_{N_{i,j}},f_{N_{i,j}})$ and $(e_{N_{i,j}'},h_{N_{i,j}'},f_{N_{i,j}'})$
both viewed inside $\so(N_{i,j})$ are conjugate by $\GL(N_{i,j})$.  Now we can apply Lemma~\ref{L:sl2orbitsarethesame}
to deduce that $(e_{N_{i,j}},h_{N_{i,j}},f_{N_{i,j}})$ and $(e_{N_{i,j}'},h_{N_{i,j}'},f_{N_{i,j}'})$
are conjugate by $\O(N_{i,j})$.  Under the identification $N_{i,j}' \iso N_{i,j} \le N$, we have that
$N_i'$ is an isotropic $\s$-submodule of $N$.  However, then the corresponding submodule
$\bar{N_i'}$ of $M^\perp$ is isotropic, and this contradicts the maximality of $M$ as
an isotropic submodule of $V$.

To summarize our findings about $N$, we have that the orthogonal direct sum decomposition
in \eqref{e:Ndecomp}, satisfies that $N_i \iso V(c_i)$ for some $c_i \in \{0,2,\dots,p-1\}$ for each $i$
and that $c_i \ne c_j$ for $i \ne j$.

Our next goal is to prove that
\begin{equation} \label{e:Mperpdecomp}
M^\perp \iso M \oplus N \iso M_1\oplus \dots \oplus M_r \oplus N_1 \oplus \dots \oplus N_s.
\end{equation}
For $j \in \{1,\dots,r\}$ we define $A_j = \bigoplus_{i \ne j} M_i \le M$.  We
consider $M^\perp/A_j$  and aim to show that
\begin{equation} \label{e:Mperp/Aj}
M^\perp/A_j \iso M_j \oplus N
\end{equation}
Noting that $M^\perp/A_j \iso N|M_j$, we see that by repeated application of Lemma~\ref{L:splitting}(c)
we can deduce \eqref{e:Mperpdecomp} from \eqref{e:Mperp/Aj}.  Thus our aim is to
establish \eqref{e:Mperp/Aj}.

Using \eqref{e:extension} and the fact that the summands in \eqref{e:Ndecomp} are pairwise
non-isomorphic, there is at most one $i$ for which $\Ext_\s^1(M_j,N_i)$ is non-zero.

If $\Ext_\s^1(M_j,N_i) =0$ for all $i$, then we have $N_i|M_j \iso N_i \oplus M_j$ for all $i$, and thus we obtain \eqref{e:Mperp/Aj}
by repeated application Lemma~\ref{L:splitting}(b).

If $\Ext_\s^1(M_j,N_i) \ne 0$ for some $i$, i.e.\ $c_i = p-d_j-2$,
then without loss of generality,
we may assume that $i=1$.  Using Lemma~\ref{L:splitting}(b) we can deduce that
\begin{equation} \label{e:Mperp/Aj2}
M^\perp/A_j \iso (N_1|M_j) \oplus N_2 \oplus \dots \oplus N_s.
\end{equation}
We may assume that $N_1|M_j$ is a non-split extension of $N_1$ by $M_j$ otherwise we obtain
\eqref{e:Mperp/Aj}.  We have that $\dim(N_1|M_j) = p$ and we can
use Corollary~\ref{C:Asemisimple} to say that $e_{N_1|M_j}$ or $f_{N_1|M_j}$ has Jordan type $(p)$.
Without loss of generality we assume that $e_{N_1|M_j}$ has Jordan type $(p)$.
We next consider the $\s$-module $A_j^\perp/A_j$ on which $(\cdot\,,\cdot)$
induces a non-degenerate form.  There an isomorphism $A_j^\perp/M \iso (M^\perp/A_j)^*$
via $(\cdot\,,\cdot)$, and also an isomorphism $N \cong N^*$ as $(\cdot\,,\cdot)$
is non-degenerate on $N$. Thus we have that
\begin{equation} \label{e:Ajperp/M}
A_j^\perp/M \iso (M_j^*|N_1) \oplus N_2 \oplus \dots \oplus N_s.
\end{equation}
Using \eqref{e:Mperp/Aj2} and \eqref{e:Ajperp/M} along with repeated
applications of Lemma~\ref{L:splitting}(b) and (c)
we deduce that
\[
A_j^\perp/A_j \iso (M_j^*|N_1|M_j) \oplus N_2 \oplus \dots \oplus N_s.
\]
From the isomorphism $A_j^\perp/M \iso (M^\perp/A_j)^*$ we obtain an isomorphism
$M_j^*|N_1 \iso (N_1|M_j)^*$.  Thus we deduce that $e_{M_j*|N_1}$ has Jordan type $p$.

Next we consider $e_{M_j^*|N_1|M_j}$.  We can choose a basis for $N_1|M_j$ containing a basis of $M_j$ and such that
the matrix of $e_{N_1|M_j}$ with respect to this basis is a single Jordan block $J_p$ of size $p$; we denote
this matrix by $[e_{N_1|M_j}]$, and use similar notation
for other matrices considered here.  We can pick a basis
of $M_j^*$ such that the matrix $[e_{M_j^*}]$ of $e_{M_j^*}$ is a single Jordan block $J_{d_j+1}$ of size $d_j+1$.
By choosing a lift of the basis of $M_j^*$ to $M_j^*|N_1|M_j$ and combining with the basis of
$N_1|M_j$ we obtain a basis of $M_j^*|N_1|M_j$ for which the matrix of $e_{M_j^*|N_1|M_j}$
has block form
\[
[e_{M_j^*|N_1|M_j}] = \begin{pmatrix}
    J_p & X \\
      & J_{d_j+1}
  \end{pmatrix},
\]
where $X$ is some $p \times (d_j+1)$ matrix.  We can consider the matrix $e_{M_j*|N_1}$
with respect to the basis obtained by projecting our basis of $M_j^*|N_1|M_j$ to $M_j^*|N_1$, and we have
\[
[e_{M_j*|N_1}] = \begin{pmatrix}
    J_{c_1+1} & X' \\
      & J_{d_j+1}
  \end{pmatrix},
\]
where $X'$ consists of the bottom $c_1+1$ rows of $X$.  The Jordan type of $e_{M_j*|N_1}$
is $(p)$, so by considering $[e_{M_j^*|N_1}]$ we see that the bottom left entry of $X'$ must be non-zero.
Thus the bottom left entry of $X$ is non-zero.  By considering $[e_{M_j^*|N_1|M_j}]$, we deduce that the
Jordan type of $e_{M_j^*|N_1|M_j}$ is $(p + d_j +1)$.
Now using Lemma~\ref{L:part}, we deduce that the first part of $\lambda(e_V)$ has size
greater than $p$, which is a contradiction because $e_V = e \in {}^1\cN^p$.
From this contradiction we deduce that $N_1|M_j$ is in fact a split extension, and so
we obtain \eqref{e:Mperp/Aj} as desired.

We have now proved \eqref{e:Mperpdecomp}.  Also note we have an isomorphism $V/M \iso (M^\perp)^*$
via $(\cdot\,,\cdot)$, and an isomorphism $N \iso N^*$ since $(\cdot\,,\cdot)$ is non-degenerate on $N$.
Thus from \eqref{e:Mperpdecomp} we obtain
\[
V/M \iso M^* \oplus N.
\]
Hence, by applying Lemma~\ref{L:splitting}(a) we obtain that
\[
V \iso M^*|M \oplus N.
\]

Our next step is to prove that $M^*|M \iso M^* \oplus M$.  Let us suppose that this is not the case,
then, using Lemma~\ref{L:splitting}(b) and (c) we can find $i$ and $j$ such that the subquotient
$M_i^*|M_j$ of $M^*|M$ is a non-split extension.  Using \eqref{e:extension} and the
fact that $M_i \iso M_i^*$ we have that $i \ne j$.  Without loss of generality we can assume that $i = 1$ and $j = 2$,
and then we have that $d_1 = p-d_2-2$.
We consider the subquotient $M_{1,2} = (M_1^* \oplus M_2^*)|(M_1 \oplus M_2)$ of $M^*|M$.
By using that $\Ext^1_\s(M_1,M_1^*) = 0 = \Ext^1_\s(M_2,M_2^*)$ along with Lemma
\ref{L:splitting}(b) and (c), we obtain that $M_{1,2} \iso (M_1^*|M_2) \oplus (M_2^*|M_1)$.  We have that
$(\cdot\,,\cdot)$ induces a non-degenerate bilinear form on $M_{1,2}$ and that $(M_1^*|M_2)$ and $(M_2^*|M_1)$ are
isotropic subspaces of $M_{1,2}$, which are dual via $(\cdot\,,\cdot)$.  By assumption we have that
$M_1^*|M_2$ is a non-split extension, and it has dimension $p$.  Then by Corollary~\ref{C:Asemisimple}
we have that $e_{M_1^*|M_2}$ or $f_{M_1^*|M_2}$ has Jordan type $(p)$.  Without loss of generality
we assume that $e_{M_1^*|M_2}$ has Jordan type $(p)$.  Since $(M_2^*|M_1)
\iso (M_1^*|M_2)^*$, we also have that $e_{M_2^*|M_1}$ has Jordan type $(p)$.  By using Lemma~\ref{L:part},
we deduce that $\lambda(e_V)$ must have first and second parts greater than or
equal to $p$, but this is not possible as $e = e_V \in {}^1\cN^p$.
This contradiction implies that $M^*|M \iso M^* \oplus M$ as desired.

We have thus far proved that the $\s$-module $V$ is semisimple and has the direct sum
decomposition
\[
V = (M_1^* \oplus \dots \oplus M_r^*) \oplus (N_1 \oplus \dots \oplus N_s) \oplus (M_1 \oplus \dots \oplus M_r)
\]
where $M_i \iso V(d_i) \iso M_i^*$ for each $i$ and $N_j \iso V(c_j)$ for each $j$.
Hence, we see that the isomorphism type of $V$ is uniquely determined by the Jordan type of $e$.

Let $(e,h',f')$ be an $\sl_2$-triple in $\so(N)$ with $f' \in {}^1\cN^p$.  Then writing
$V'$ for the $V$ viewed as an $\s$-module for $\sspan\{e,h',f'\}$, we have that
$V'$ is isomorphic to $V$.  From this we deduce that $(e,h',f')$ is conjugate
to $(e,h,f)$ via $\GL(V) = \GL_n(\kk)$, and thus by Lemma~\ref{L:sl2orbitsarethesame} is conjugate
via $\O(V) = \O_n(\kk)$.  This gives the desired injectivity of the map in \eqref{e:Kostantmapthm}, and
completes this proof.
\end{proof}

All that is left to do to prove Theorem~\ref{T:classification}(d) is to prove that
${}^1\cN^p$ is the unique maximal $G$-stable subvariety of $\cN$ satisfying the $\sl_2$-property.
To show this let $\cV$ be a $G$-stable closed subvariety of $\cN$ such that $\cV \nsubseteq \cN^{p-1}$.
Then there is an element in $\cV$ which has Jordan type
$\lambda = (\lambda_1,\lambda_2,\dots,\lambda_m)$, where either $\lambda_1 > p$, or
$\lambda_1 = \lambda_2 = p$.  Using Theorem~\ref{T:closureorder}, we deduce that there is an element in $\cV$
with Jordan type $(p+2,1,\dots,1)$ or $(p,p,1, \dots,1)$.  For the first possibility
we can apply Proposition~\ref{P:cVsubcNp} to deduce that $\cV$ does not satisfy
the $\sl_2$-property, whilst in the second case we can apply Proposition~\ref{P:SLp}
to deduce that $\cV$ does not satisfy the $\sl_2$-property.

\subsection{Deduction of Theorem~\ref{T:classification}(e)}

We are left to deal with the case $G = \SO_n(\kk)$, and we prove that ${}^1\cN^p$ satisfies the $\sl_2$-property
in Proposition~\ref{P:orthogonalunderSOn}.  This is deduced from Proposition~\ref{P:orthogonal}
along with considerations of how $\O_n(\kk)$-orbits of $\sl_2$-triples $(e,h,f)$ in $\g = \so_n(\kk)$ with $e,f \in \cN$ split
into $\SO_n(\kk)$-orbits.

\begin{Proposition} \label{P:orthogonalunderSOn}
Let $G = \SO_n(\kk)$.  Then ${}^1\cN^p$ satisfies the $\sl_2$-property.
\end{Proposition}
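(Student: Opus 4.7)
The plan is to deduce Proposition~\ref{P:orthogonalunderSOn} from Proposition~\ref{P:orthogonal} by comparing how $\O_n(\kk)$-orbits refine into $\SO_n(\kk)$-orbits on both sides of the correspondence. The map sending the $\SO_n(\kk)$-orbit of an $\sl_2$-triple to the $\SO_n(\kk)$-orbit of its nilpotent element is evidently well-defined, and surjectivity is immediate from Proposition~\ref{P:orthogonal} since every $e \in {}^1\cN^p$ already lies in some triple $(e,h,f)$ with $f \in {}^1\cN^p$. Hence the content of the proposition is injectivity of this map.

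To prove injectivity, consider $\sl_2$-triples $(e,h,f)$ and $(e',h',f')$ with $e,e',f,f' \in {}^1\cN^p$ and with $e, e'$ in the same $\SO_n(\kk)$-orbit. After an $\SO_n(\kk)$-conjugation we may assume $e = e'$, and then Proposition~\ref{P:orthogonal} yields $g \in \O_n(\kk)$ with $g \cdot e = e$ and $g\cdot(e,h,f) = (e,h',f')$. If $g \in \SO_n(\kk)$ we are done, and otherwise it suffices to produce an element $g_0 \in \O_n(\kk)$ fixing $(e,h,f)$ with $\det(g_0) = -1$: then $gg_0 \in \SO_n(\kk)$ carries $(e,h,f)$ to $(e,h',f')$. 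Thus the proposition reduces to the assertion that the centralizer of $(e,h,f)$ in $\O_n(\kk)$ meets $\O_n(\kk) \setminus \SO_n(\kk)$ whenever the centralizer of $e$ in $\O_n(\kk)$ does, equivalently whenever $\lambda := \lambda(e)$ is not very even. This centralizer comparison is the step I expect to be the main obstacle.

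To verify it I would use the $\s$-module decomposition of $V = \kk^n$ from the proof of Proposition~\ref{P:orthogonal}: $V$ is a semisimple $\s$-module whose simple summands all have the form $V(d)$ for $d \in \{0,1,\dots,p-1\}$, and the defining symmetric form $(\cdot\,,\cdot)$ on $V$ is $\s$-invariant. Schur's lemma then identifies the centralizer of $(e,h,f)$ in $\O_n(\kk)$ with the group of $\s$-linear isometries of $V$, and grouping by isotypic component gives an isomorphism
\[
\prod_{k \text{ odd}} \O(m_k(\lambda),\kk) \;\times\; \prod_{k \text{ even}} \Sp(m_k(\lambda),\kk),
\]
where odd Jordan block sizes $k = d+1$ correspond to self-dual symmetric-form summands $V(d)$ (contributing orthogonal factors, including the exceptional $V(p-1)$ when it occurs, since $p$ is odd) and even Jordan block sizes correspond to alternating-form summands (contributing symplectic factors, which by the parity condition in $\so_n(\kk)$ appear in hyperbolic pairs). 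An element $g$ of the $\O(m_k(\lambda),\kk)$ factor with $k$ odd acts as $g \otimes \mathrm{id}$ on $V(k-1)^{\oplus m_k(\lambda)}$ and so contributes $\det(g)^k = \det(g)$ to the determinant on $V$, while the symplectic factors lie in $\SL$ and contribute trivially. Consequently the centralizer of $(e,h,f)$ in $\O_n(\kk)$ meets $\O_n(\kk) \setminus \SO_n(\kk)$ if and only if some $m_k(\lambda)$ with $k$ odd is positive, i.e.\ $\lambda$ has an odd part, which is exactly the condition that $\lambda$ is not very even, completing the proof.
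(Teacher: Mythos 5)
Your proof is correct and reaches the conclusion by a genuinely different route from the paper's. The paper constructs a distinguished representative: given $\lambda$, it realizes $(e,h,f)$ inside a subgroup $H \iso \prod_{i \text{ odd}} \O_i(\kk)^{m_i(\lambda)} \times \prod_{i \text{ even}} \GL_i(\kk)^{m'_i(\lambda)}$ adapted to an orthogonal/hyperbolic decomposition of $V$, exhibits (when $\lambda$ has an odd part) a determinant~$-1$ element in the centre of $H$ that centralizes the whole triple, and then handles the very even case with a separate orbit-counting argument showing the $\O_n(\kk)$-orbit of triples splits into exactly two $\SO_n(\kk)$-orbits that line up with the two $\SO_n(\kk)$-orbits of $e$. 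You instead work with an arbitrary triple, use the semisimplicity of $V$ as an $\s$-module (established in the proof of Proposition~\ref{P:orthogonal}) together with Schur's lemma to identify the centralizer of $(e,h,f)$ in $\O_n(\kk)$ as $\prod_{k \text{ odd}} \O(m_k(\lambda),\kk) \times \prod_{k \text{ even}} \Sp(m_k(\lambda),\kk)$, and read off the image of the determinant. Your reduction is cleaner in one respect: the very even case needs no special treatment, since if $\lambda$ has no odd parts then any element of $\O_n(\kk)$ centralizing $e$ already lies in $\SO_n(\kk)$, so the $g$ furnished by Proposition~\ref{P:orthogonal} is automatically in $\SO_n(\kk)$. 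The cost is that you lean on standard but not-in-the-paper structure theory for centralizers of $\sl_2$-triples in classical groups --- the orthogonality of distinct isotypic components under an invariant form, the factorization of the form across an isotypic component, and the fact that $V(d)$ carries a symmetric invariant form precisely when $d$ is even (which, since $p$ is odd, includes the exceptional $V(p-1)$) and a skew form when $d$ is odd --- whereas the paper's explicit construction of $H$ keeps the determinant check elementary and self-contained.
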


\begin{proof}
We know that the map in \eqref{e:Kostantmapthm} for $\cV = {}^1\cN^p$
is surjective, for the same reasons as the corresponding statement in
Proposition~\ref{P:orthogonal}, that is, by the result of Pommerening in \cite[\S2.1]{Pommerening}, or the theory of
standard $\sl_2$-triples recapped in \S\ref{ss:standard}.

As explained in \S\ref{ss:nilpotentorbits} the $\O_n(\kk)$-orbit of $x \in \so_n(\kk)$
is either a single $\SO_n(\kk)$-orbit, or splits into two $\SO_n(\kk)$-orbits, with the
former case occurring precisely when there exists $g \in \O_n(\kk)$ with $\det g = -1$
such that $gx = xg$.  The underlying argument can also be applied to $\sl_2$-triples in $\so_n(\kk)$.
Thus we have that for an $\sl_2$-triple $(e,h,f)$ in $\so_n(\kk)$ the $\O_n(\kk)$-orbit
of $(e,h,f)$ is either a single $\SO_n(\kk)$-orbit or splits into two $\SO_n(\kk)$-orbits.
Moreover, we have that the $\SO_n(\kk)$-orbit of $(e,h,f)$ is equal to the $\O_n(\kk)$-orbit
if and only if there exists some $g \in \O_n(\kk)$ with
$\det g=-1$, $ge=eg$, $gh=hg$ and $gf=fg$.

Let $\lambda$ be the Jordan type of a nilpotent element in ${}^1\cN^p$.  Using $\lambda$
we construct a specific realization of some $e \in {}^1\cN^p$ with $e \sim \lambda$ and an $\sl_2$-triple
$(e,h,f)$.

Let $V = \kk^n$ be the natural module for $\O_n(\kk)$.  Let $m'_i(\lambda) = \frac{m_i(\lambda)}{2}$
for even $i$.  We can form an orthogonal direct sum decomposition of $V$ of the form
\begin{equation} \label{e:Vdecomp}
V = \bigoplus_{i \text{ odd}} \bigoplus_{j=1}^{m_i(\lambda)} V_{i,j} \oplus \bigoplus_{i \text{ even}} \bigoplus_{j=1}^{m'_i(\lambda)} (U_{i,j} \oplus U'_{i,j}),
\end{equation}
where for odd $i$ each $V_{i,j}$ is a non-degenerate subspace of dimension $i$, and for even $i$ the pair $U_{i,j}$ and $U'_{i,j}$
are isotropic subspaces of dimension $i$, which are in duality under the symmetric bilinear form on $V$.  Corresponding to this decomposition of
$V$ we have a subgroup
\[
H \iso \prod_{i \text{ odd}} \O_i(\kk)^{m_i(\lambda)} \times \prod_{i \text{ even}} \GL_i(\kk)^{m'_i(\lambda)}
\]
of $\O_n(\kk)$.  The Lie algebra of $H$ is
\begin{equation} \label{e:hdecomp}
\h \iso \bigoplus_{i \text{ odd}} \so_i(\kk)^{\oplus m_i(\lambda)} \oplus \bigoplus_{i \text{ even}} \gl_i(\kk)^{\oplus m'_i(\lambda)},
\end{equation}
and is a subalgebra of $\so_n(\kk)$.

We choose $e \in \h$ to be regular nilpotent in each of the summands in \eqref{e:hdecomp}.
Then by construction we see that $e \sim \lambda$.  We can find an $\sl_2$-triple $(e,h,f)$ in $\h$,
for example this now follows from Theorem~\ref{T:classification}(a) and (d).
By Proposition~\ref{P:orthogonal} we know that $(e,h,f)$ lies in the unique $\O_n(\kk)$-orbit
of $\sl_2$-triples in $\so_n(\kk)$ with $e \sim \lambda$.

Suppose that $\lambda$ has an odd part, and let $i \in \Z_{>0}$ be odd and such that $m_i(\lambda) > 0$.
Then we can define $g \in \O_n(\kk)$ by declaring that $g$ acts of $V_{i,1}$ by $-1$ and on all other summands
in \eqref{e:Vdecomp} by 1.  We see that $\det g=-1$, and $g$ lies in the
centre of $H$ so that $ge=eg$, $gh=hg$ and $gf=fg$.  Hence,
the $\SO_n(\kk)$-orbit of $(e,h,f)$ is equal to the $\O_n(\kk)$-orbit, and
hence is the unique $\SO_n(\kk)$-orbit of $\sl_2$-triples with $e \sim \lambda$.

Now suppose that $\lambda$ is very even.  Then we know that the $\O_n(\kk)$-orbit of $e$ splits
into two $\SO_n(\kk)$-orbits, and we let $e' \in \so_n(\kk)$ be a representative of the other
$\SO_n(\kk)$-orbit in $(\Ad \O_n(\kk)) e$.  There is an $\sl_2$-triple $(e',h',f')$, which lies
in the $\O_n(\kk)$-orbit of $(e,h,f)$.  Also $(e',h',f')$ is not in the $\SO_n(\kk)$-orbit of
$(e,h,f)$, as $e$ is not conjugate to $e'$ via $\SO_n(\kk)$.  It follows that the $\O_n(\kk)$-orbit
of $(e,h,f)$ splits into two $\SO_n(\kk)$-orbits, and these are the $\SO_n(\kk)$-orbits of
$(e,h,f)$ and $(e',h',f')$.  Now using Proposition~\ref{P:orthogonal} we deduce
that the $\SO_n(\kk)$-orbit of $(e,h,f)$ is the only orbit mapping to the $\SO_n(\kk)$-orbit
of $e$ by the map in \eqref{e:Kostantmapthm}; and that
the $\SO_n(\kk)$-orbit of $(e',h',f')$ is the only orbit mapping to the $\SO_n(\kk)$-orbit
of $e'$ by the map in \eqref{e:Kostantmapthm}.

We have shown that for each $e \in {}^1\cN^p$, there is a unique $\SO_n(\kk)$-orbit
of $\sl_2$-triples $(e,h,f)$ with $e,f \in {}^1\cN^p$ which maps to the
$\SO_n(\kk)$-orbit of $e$ under the map in \eqref{e:Kostantmapthm}.
This shows that the map in \eqref{e:Kostantmapthm} is injective for
$\cV = {}^1\cN^p$, and hence that ${}^1\cN^p$ satisfies the $\sl_2$-property.
\end{proof}

To complete the proof of Theorem~\ref{T:classification}(e), we are just left to show the maximality
of $\cV = {}^1\cN^p$ subject to satisfying the $\sl_2$-property, but this can be done using
the arguments at the end of \S\ref{ss:orthogonal}.

Hence, we have completed the
proof of all parts of Theorem~\ref{T:classification}.


\begin{thebibliography}{Jan2}

\bibitem[Ca]{Carter}
R.~W.~Carter, {\em Finite groups of Lie type: Conjugacy classes and complex characters}, John Wiley and
Sons, New York, 1985.

\bibitem[CR]{CurtisReiner}
C.~W.~Curtis and I.~Reiner. {\em Methods of representation theory: With applications to finite groups and orders, Vol.\ I}, Wiley, 1990.

\bibitem[Hu]{Humphreys} J.~E.~Humphreys, {\em Conjugacy classes in semisimple algebraic groups},
Mathematical Surveys and Monographs {\bf 43}, American Mathematical Society, Providence, RI, 1995.

\bibitem[Jac1]{Jacobson} N.~Jacobson, Completely reducible Lie algebras of linear transformations,
{\em Proc.\ Amer.\ Math.\ Soc.\ }{\bf 2} (1951), 105--113.

\bibitem[Jac2]{JacobsonTDSLA} \bysame, %N.~Jacobson,
A note on three dimensional simple Lie algebras, {\em J.\ Math.\ Mech.\ }{\bf 7} (1958), 823--831.

\bibitem[Jan1]{JantzenLA}
J.~C.~Jantzen, Representations of Lie algebras in prime characteristic, {\em Representation
Theories and Algebraic Geometry}, Proceedings (A. Broer, Ed.), pp.\ 185--235. Montreal,
NATO ASI Series, Vol. C 514, Kluwer, Dordrecht, 1998.

\bibitem[Jan2]{JantzenNO} \bysame, %J.~C.~Jantzen,
Nilpotent orbits in representation theory, in
{\em Lie theory, Lie algebras and representations}, Progr.\ in
Math.\ {\bf 228}, Birkh\"auser, Boston, 2004.

\bibitem[Kos]{Kostant}
B.~Kostant, The principal three-dimensional subgroup and the Betti
numbers of a complex simple Lie group, {\em Amer. J. Math.\ }{\bf 81} (1959), 973--1032.

\bibitem[McN]{McNinch}
G.~J.~McNinch. Optimal $\SL(2)$-homomorphisms, {\em Comment. Math. Helv.\ }{\bf 80} (2005), 391--426.

\bibitem[Mo]{Morozov}
V.~V.~Morozov, On a nilpotent element in a semi-simple Lie algebra,
{\em C. R. (Doklady) Acad. Sci. URSS (N.S.)} {\bf 36} (1942), 83--86.

\bibitem[Po]{Pommerening} K.~Pommerening, \"Uber die unipotenten Klassen reduktiver Gruppen II,
{\em J.\ Alg.\ }{\bf 65} (1980), 373--398.

\bibitem[Pr]{Premet} A.~Premet, Nilpotent orbits in good characteristic and the Kempf–Rousseau theory,
{\em J.\ Alg.\ }{\bf 260} (2003),
338--366.

\bibitem[PS]{PremetStewart}
A.~Premet and D.~I.~Stewart, Classification of the maximal subalgebras of exceptional Lie
              algebras over fields of good characteristic,
{\em J.\ Amer.\ Math. Soc.\ }{\bf 32} (2019), 965--1008.

\bibitem[Se]{Seitz}  G.~M.~Seitz, Unipotent elements, tilting modules, and saturation,
{\em Invent.\ Math.\ }{\bf 141} (2000), 467--502.

\bibitem[Sp]{Spaltenstein}
N.~Spaltenstein, {\em Classes unipotentes et sous-groupes de Borel},
Lecture Notes in Mathematics {\bf 946}, Springer-Verlag, 1982.

\bibitem[SS]{SpringerSteinberg} T.~A.~Springer and R.~Steinberg, Conjugacy classes,
in {\em Seminar on Algebraic Groups and Related Finite Groups},
Lecture Notes in Mathematics {\bf 131}, Springer, 1970.

\bibitem[ST]{StewartThomas} D.~I.~Stewart and A.~R.~Thomas,
The Jacobson--Morozov theorem and complete reducibility of Lie subalgebras,
{\em Proc.\ Lond.\ Math.\ Soc.\ }{\bf 116} (2018), 68--100.

\end{thebibliography}
\end{document}